\numberwithin{equation}{section}
\newtheorem{theorem}{Theorem}[section]
\newtheorem{lemma}[theorem]{Lemma}
\newtheorem{thm}{Theorem}[section]
\newtheorem{prop}[theorem]{Proposition}
\newtheorem*{conj}{Hardy-Littlewood $k$-tuple conjecture}
\theoremstyle{definition}
\newtheorem{definition}[theorem]{Definition}
\newtheorem{rem}[theorem]{Remark}
\newtheorem{nota}[theorem]{Notation}
\newcommand{\Mod}[1]{\ (\mathrm{mod}\ #1)}
\title{Almost primes and primes that are sums of two squares plus one}
\author{Kunjakanan Nath and Likun Xie}
\address{Institut \'Elie Cartan de Lorraine, Universit\'e de Lorraine, CNRS, F-54000 Nancy, France}
\email{kunjakanan@gmail.com}
\address{Department of Mathematics, University of Illinois, 1409 West Green
	Street, Urbana, IL 61801, USA} 
\email{likunx2@illinois.edu}
\date{\today}
\begin{document}
	
\begin{abstract}
		In this paper, we obtain a lower
		bound for the number of primes \( p \leq x \) such that \( p - 1 \) is a sum of two squares and \( p + 2 \) has a bounded number of prime factors. The proof uses the vector sieve framework, involving a semi-linear sieve and a linear sieve. 
\end{abstract}

	\subjclass[2020]{11N05, 11N35, 11N36}
	\keywords{linear sieve, semi-linear sieve, vector sieve}

	\maketitle

	\section{Introduction}\label{sec: Intro}
	
	One of the famous unsolved problems in number theory is the \emph{twin prime conjecture}, which states that there are infinitely many primes $p$ such that $p + 2$ is also a prime. In fact, Hardy and Littlewood made a more general conjecture. To state it, we need the notion of an admissible $k$-tuple.
	
	We say a set $\mathcal{H}=\{h_1 , h_2 , \dotsc , h_k\}$ is {\it admissible} if it avoids at least one residue class modulo $p$ for each prime $p$.
	
	\begin{conj}
		Let $\mathcal{H}=\{h_1, \dotsc, h_k\}$ be an admissible set. Then there are infinitely many integers $n$ such that $n + h_1, \dotsc, n+h_k$ are all primes. More precisely,
		\begin{align*}
			\#\{n\leq x\colon n+h_1, \dotsc, n+h_k\: \text{are primes}\} \sim \mathfrak{S}(k)\int_{2}^x\dfrac{dt}{(\log t)^k} \quad \text{as $x\to \infty$},
		\end{align*}
		where
		\begin{align*}
			\mathfrak{S}(k):=\prod_{p\: \text{prime}}\bigg(1-\dfrac{\#(\mathcal{H}\Mod p)}{p}\bigg)\bigg(1-\dfrac{1}{p}\bigg)^{-k}.
		\end{align*}
	\end{conj}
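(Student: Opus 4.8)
\bigskip
\noindent\textbf{A proof proposal (and why it cannot be completed).} The only known route toward a statement of this precision is the Hardy--Littlewood circle method, and the plan would run as follows. One first replaces the prime-tuple count by the von Mangoldt--weighted sum $\Psi(x):=\sum_{n\le x}\prod_{i=1}^{k}\Lambda(n+h_i)$, discards the negligible contribution of prime powers, and recovers the original count from $\Psi(x)$ by partial summation, each $\Lambda$ contributing its mean value $1/\log t$. One then expands every factor $\Lambda(n+h_i)$ by Fourier analysis on $\mathbb{R}/\mathbb{Z}$, so that $\Psi(x)$ becomes an integral over the circle. On a major arc around a rational $a/q$ with $q$ small the main term is governed by Ramanujan-type sums in the shifts $h_i$; summing the resulting local factors over all $q$ is meant to rebuild exactly the Euler product $\mathfrak{S}(k)$, while the archimedean factor produces $\int_2^x dt/(\log t)^k$. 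On the minor arcs one would need the total contribution to be $o\!\left(x/(\log x)^k\right)$.

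The key steps, in order, are: (1) the reduction from the prime-tuple count to $\Psi(x)$, with control of the prime-power and partial-summation errors; (2) the local computation producing the singular series, which simultaneously shows $\mathfrak{S}(k)\neq 0$ --- for each prime $p$ the factor $1-\#(\mathcal{H}\Mod p)/p$ is nonzero precisely because admissibility allows $\mathcal{H}$ to omit a residue class modulo $p$, and the product converges since $\#(\mathcal{H}\Mod p)=k$ for all $p>h_k$, making those factors $1+O_k(p^{-2})$; (3) the major-arc asymptotic, assembling (2) with the archimedean integral; and (4) the minor-arc bound.

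The main obstacle is step (4), and it is one with no known resolution for any $k\geq 2$: exponential-sum methods produce no usable cancellation in a product of two or more copies of $\Lambda$ evaluated along the tuple $n+h_1,\dots,n+h_k$, and the same difficulty reappears on the sieve side as the parity phenomenon, which prevents sieve weights from detecting the joint primality of $n+h_1,\dots,n+h_k$. Consequently the conjecture is open in every nontrivial case $k\geq 2$. What is attainable today is the bounded-gaps weakening (Zhang; Maynard; Tao) and the almost-prime analogues reachable by sieves --- precisely the circle of ideas developed in the present paper for the tuple-type problem in which $p-1$ is a sum of two squares and $p+2$ has few prime factors.
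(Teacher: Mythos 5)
This statement is the Hardy--Littlewood $k$-tuple conjecture, which the paper states only as a conjecture and does not prove (nor does anyone else); there is no "paper's own proof" to compare against. You correctly recognize this, and your account of the circle-method heuristic --- the reduction to the $\Lambda$-weighted sum, the local computation of the singular series with the observation that admissibility forces $\mathfrak{S}(k)\neq 0$ and that the factors are $1+O_k(p^{-2})$ for $p>h_k$, and the identification of the minor-arc bound (equivalently, the parity obstruction on the sieve side) as the fatal gap for every $k\geq 2$ --- is accurate and is consistent with how the paper treats the conjecture, namely as a motivating open problem whose attainable weakenings (Chen-type and almost-prime results) are what the paper actually pursues.
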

	
	Although the above conjecture is far from being solved, there has been spectacular progress recently. Using sieve methods (for example, see \cite[Theorem 5.8]{HR1974}), one can show that
	\[ \#\{n\leq x\colon n+h_1, \dotsc, n+h_k\: \text{are primes}\}\ll_{k}\mathfrak{S}(k)\dfrac{x}{(\log x)^k}.\]
	For the lower bound, if $\mathcal{H}=\{0, 2\}$, the celebrated work of Chen \cite{Che1973} implies that
	\[
	\#\{p \leq x \colon p \text{ is prime and } p + 2 \text{ has at most two prime factors}\} \gg \frac{x}{(\log x)^2}.
	\]
	In a different direction, assuming the Elliot-Halberstam conjecture, Goldston, Pintz, and Y{\i}ld{\i}r{\i}m \cite{GPY2009} showed the bounded gaps between primes. However, the recent works of Zhang \cite{Zha2014}, Maynard \cite{May2015}, and Polymath \cite{Poly} imply the existence of bounded gaps between primes unconditionally.
	
	More recently, Heath-Brown and Li \cite{HBL2016} considered the generalization of Chen's theorem for prime triples. They showed that there are infinitely many primes $p$ such that $p+2$ has at most two prime factors and \( p + 6 \) has at most \( 76 \) prime factors. 
	More precisely, they showed that 
	\begin{align*}
		\#\{p\leq x\colon p\: \text{prime}, \:  \Omega(p+2)\leq 2, \Omega(p+6)\leq 76\}\gg\dfrac{x}{(\log x)^3},
	\end{align*}
	where $\Omega(n)$ counts the number of prime factors of $n$ counted with multiplicity. The bound \( 76 \) has been reduced to \( 14 \) by Cai \cite{Cai2017}, and Zhu \cite{Zhu2024} further improved it to \( 11 \).
	
	We also take this opportunity to mention Friedlander and Iwaniec's \cite{FI2009} work towards the {Hyperbolic Prime Number Theorem}. Assuming the Elliott-Halberstam Conjecture, they\footnote{The upper bound is established without any assumption.} showed that
	\[x\ll \sum_{n\leq x}\Lambda(n)r(n-2)r(n+2)\ll x,\]
	where $\Lambda$ is the von-Mangoldt function and $r(n)$ counts the number of representation of $n$ as the sum of two squares. The above result reflects the correlation between primes $p$ and the sums of two squares of the form $p-2$ and $p+2$.
	
	For more general sieve-theoretic results towards prime $k$-tuple conjecture, we invite the readers to see \cite{FI2010, HR1974}. There has also been some progress on the twin prime conjecture on average using analytic methods; see recent work of Matom\"aki, Radziwi\l{}\l{}, and Tao \cite{MRT2019i}.

	An interesting subset of primes is the primes of the form $p=m^2+n^2+1$, where $m$ and $n$ are some non-zero integers. It is one of the simplest non-trivial examples of a ``sparse subset of the primes'' consisting of the values of a multivariate polynomial. Indeed, an application of the upper bound sieve (see \cite{Iwa1972} or \cite{Mot1971}) shows that
	\begin{align}
		\notag \#\{p\leq x\colon p=m^2+n^2+1, \: p~\text{prime}\}\ll\dfrac{x}{(\log x)^{3/2}}.
	\end{align}
	It is also known that there are infinitely many primes of the form $p=m^2+n^2+1$, a result due to Linnik \cite{Lin1960}, who established it by using the \emph{dispersion method}. For the lower bound, Motohashi \cite{Mot1970} showed that
	\begin{align}
		\notag \#\{p\leq x\colon p=m^2+n^2+1, p~\text{prime}\}\gg \dfrac{x}{(\log x)^{2}}.
	\end{align}
	Finally, in his celebrated work \cite{Iwa1972}, Iwaniec used the semi-linear sieve to establish the matching lower bound 
	\begin{align}
		\notag \#\{p\leq x\colon p=m^2+n^2+1, \: p~\text{prime}\}\gg \dfrac{x}{(\log x)^{3/2}}. 
	\end{align}
	We still do not have an asymptotic formula for the number of primes of the form $p=m^2+n^2+1$ up to any positive actual number $x$ unconditionally. See \cite[Corollary 2]{Iwa1976} for a conditional asymptotic formula. There have also been several generalizations of Iwaniec's result to other subsets of integers; for example, see Huxley-Iwaniec \cite{HI1975}, Wu \cite{Wu1998}, Matom\"aki \cite{Mat2007}, Ter\"av\"ainen \cite{Ter2018}, Nath \cite{Nat2024}.

	\subsection{Main result} It is reasonable to expect that there are infinitely many primes of the form $p=m^2+n^2+1$ such that $p+2$ is also a prime. In fact, it is expected that as $x\to \infty$,
    \[\{p\leq x\colon p\: \text{prime}, p=1+m^2+n^2, m, n\in \mathbb{N}, p+2\:\text{prime}\}\sim C\dfrac{x}{(\log x)^{5/2}}\]
    for some constant $C>0$.
    
    Motivated by the above expectation, we will investigate the correlation between primes of the form \( p = m^2 + n^2 +1 \) and almost primes\footnote{Given any integer $k\geq 1$, we say a positive integer $n$ is $k$-almost prime if $n$ has at most $k$ prime factors.} of the form \( p + 2 \). We will show that there are infinitely many primes of the form $p=m^2+n^2+1$ such that $p+2$ has at most $11$ prime factors.

	\begin{thm}\label{theorem1.1}
		We have
		\[
		\#\left\{ 
		p \leq x\colon  p \text{ prime}, \, p = m^2 + n^2+1, \, m, n \in \mathbb{N}, \, \Omega(p+2) \leq 11
		\right\} \gg \frac{x}{(\log x)^{5/2}}.
		\]
	\end{thm}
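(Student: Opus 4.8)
The plan is to detect the two conditions on $p$ by a vector sieve combining a semi-linear sieve (dimension $1/2$) with a linear sieve. Put $a_1(p)=\mathbb{1}[p-1=m^2+n^2\text{ for some }m,n\ge 1]$ and, for a parameter $z_2$ that will be a small fixed power of $x$, put $a_2(p)=\mathbb{1}[(p+2,P(z_2))=1]$ with $P(z_2)=\prod_{q<z_2}q$; if $z_2\ge(x+2)^{1/12}$ then $a_2(p)=1$ forces $\Omega(p+2)\le 11$, so it suffices to show $S:=\sum_{p\le x}a_1(p)a_2(p)\gg x/(\log x)^{5/2}$. Since an integer is a sum of two squares iff every prime $\equiv 3\pmod 4$ divides it to an even power, and sieving $p-1$ over all primes $\equiv 3\pmod 4$ up to a useful level is too expensive, I introduce the auxiliary sifting function $\widetilde a_1(p)=\mathbb{1}[(p-1,P_3(z_1))=1]$ with $P_3(z_1)=\prod_{q<z_1,\,q\equiv 3\,(4)}q$ and $z_1=x^{\theta_1}$, together with the defect $b_1(p)=\widetilde a_1(p)\,\mathbb{1}[p-1\neq m^2+n^2]$; pointwise $a_1\ge\widetilde a_1-b_1$, so
\[
S\ \ge\ \sum_{p\le x}\widetilde a_1(p)a_2(p)\ -\ \sum_{p\le x}b_1(p)a_2(p).
\]

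For the first sum I run the vector sieve. Let $\lambda_1^{\pm}$ be lower/upper semi-linear sieve weights of level $D_1$ for the sifting by primes $\equiv 3\pmod 4$ up to $z_1$, and $\lambda_2^{\pm}$ the Rosser--Iwaniec linear sieve weights of level $D_2$ for the sifting by all primes up to $z_2$; these satisfy the usual sieve inequalities and $\lambda_i^{+}\ge 1$ on the support of the respective sifting function. Hence $\widetilde a_1a_2\ge\lambda_1^{+}\lambda_2^{-}+\lambda_1^{-}\lambda_2^{+}-\lambda_1^{+}\lambda_2^{+}$ pointwise, so $\sum\widetilde a_1a_2\ge\Sigma(\lambda_1^{+},\lambda_2^{-})+\Sigma(\lambda_1^{-},\lambda_2^{+})-\Sigma(\lambda_1^{+},\lambda_2^{+})$, where
\[
\Sigma(\mu,\nu):=\sum_{\substack{d_1\mid P_3(z_1)\\ d_2\mid P(z_2)}}\mu_{d_1}\nu_{d_2}\,\#\{p\le x:\ d_1\mid p-1,\ d_2\mid p+2\}.
\]
The inner count equals $\pi(x;[d_1,d_2],a)$ for a suitable residue $a$ when $(d_1,d_2)\mid 3$ and vanishes otherwise; replacing it by $\pi(x)/\varphi([d_1,d_2])$, the resulting main term factors (up to a bounded Euler correction coming from shared primes and from the condition $(d_1,d_2)\mid 3$) as a product of a semi-linear sieve sum in $d_1$ and a linear sieve sum in $d_2$, giving a main contribution $\asymp\pi(x)(\log z_1)^{-1/2}(\log z_2)^{-1}\gg x/(\log x)^{5/2}$ with an explicit positive constant proportional to $F_{1/2}(s_1)f_1(s_2)+f_{1/2}(s_1)F_1(s_2)-F_{1/2}(s_1)F_1(s_2)$, where $s_i=(\log D_i)/(\log z_i)$. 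The total error is $\ll\sum_{q\le D_1D_2}\tau_3(q)\max_{a}|\pi(x;q,a)-\pi(x)/\varphi(q)|\ll x(\log x)^{-A}$ by Bombieri--Vinogradov provided $D_1D_2\le x^{1/2-\varepsilon}$ (the well-factorability of the linear sieve weights can be used to relax this), so I impose $\theta_1 s_1+\theta_2 s_2\le 1/2-\varepsilon$.

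The second sum is the genuine obstacle. A $p$ counted by $b_1$ has $p-1$ with no prime factor $\equiv 3\pmod 4$ below $z_1=x^{\theta_1}$ yet with some prime $q\equiv 3\pmod 4$, $q>x^{\theta_1}$, dividing $p-1$ to an odd power; writing $p-1=qm$, I must bound the number of such primes $p\le x$ that also satisfy $(p+2,P(z_2))=1$. Using $a_2\le\lambda_2^{+}$ turns $\sum b_1a_2$ into $\sum_{d_2\mid P(z_2)}\lambda_{2,d_2}^{+}\sum_{p\le x,\ d_2\mid p+2}b_1(p)$, so one needs, uniformly in $d_2\le D_2$, a bound $\sum_{p,\ d_2\mid p+2}b_1(p)\ll\varphi(d_2)^{-1}x(\log x)^{-3/2}$ with a small absolute constant. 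By a switching argument in the style of Chen and Iwaniec one splits according to the size of $q$: for $q$ up to the available level one estimates $\#\{p\le x: q\mid p-1,\ d_2\mid p+2,\ (p-1)/q\text{ free of primes }\equiv 3\,(4)\text{ below }z_1\}$ by an upper-bound sieve in the progression $p\equiv 1\,(q),\ p\equiv -2\,(d_2)$, which supplies the factor $(\log x)^{-1/2}$, and summing over $q$ only contributes a bounded constant since $\sum_{x^{\theta_1}<q\le x}q^{-1}\ll_{\theta_1}1$; for larger $q$ one switches and bounds $\#\{q\sim Q: qm+1\text{ prime},\ d_2\mid qm+3\}$ by an upper-bound linear sieve, summing over the admissible $m=(p-1)/q$. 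Making the constant here strictly smaller than the main constant above, uniformly in $d_2$, is the core difficulty, and it is what fixes how large $\theta_1$ (hence $D_1$) can be taken.

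Finally one balances parameters. The requirement $\Omega(p+2)\le 11$ forces $z_2\approx x^{1/12}$, so $s_2>2$ needs $D_2\gtrsim x^{1/6}$ and hence $D_1\lesssim x^{1/3}$; within this budget one takes $s_2$ large enough that $F_1(s_2)-f_1(s_2)$ is small, and then $\theta_1$ (equivalently $s_1$) as large as the switching argument permits while keeping $F_{1/2}(s_1)f_1(s_2)+f_{1/2}(s_1)F_1(s_2)-F_{1/2}(s_1)F_1(s_2)$ positive and above the defect constant (a weighted Richert-type form of the linear sieve may be inserted to sharpen the admissible number of prime factors). Optimizing these choices yields the value $11$ and the bound $S\gg x/(\log x)^{5/2}$, which is Theorem~\ref{theorem1.1}. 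I expect the main effort to go into the third step: carrying out Iwaniec's recovery of the sum-of-two-squares property so that it is uniform in the auxiliary modulus $d_2\mid p+2$ produced by the vector sieve, with a loss provably below the main term.
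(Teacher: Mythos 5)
Your architecture --- a vector sieve combining a semi-linear and a linear sieve for the main term, plus a switching argument for the defect coming from large primes $q\equiv 3\pmod 4$ dividing $p-1$ --- is the same as the paper's, but there is a decisive quantitative gap. You take $z_2=x^{1/12}$ so that $(p+2,P(z_2))=1$ alone forces $\Omega(p+2)\le 11$, and that exponent is not attainable. The lower-bound constant of the vector sieve is proportional to $f_1(s_1)F_2(s_2)+f_2(s_2)F_1(s_1)-F_1(s_1)F_2(s_2)$ (indices $1$ = semi-linear, $2$ = linear), which is positive precisely when $\tfrac12\log\bigl(1+2(s_1-1)+2\sqrt{s_1(s_1-1)}\bigr)+\log(s_2-1)>1$; the Bombieri--Vinogradov constraint $D_1D_2\le x^{1/2-\varepsilon}$ couples the sifting variables via $s_1\theta_1+s_2\theta_2\le 1/2$. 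With $\theta_2=1/12$ and $\theta_1\ge 1/4$ (which the switching step forces, see below), one checks that the maximum of the left-hand side over admissible $(s_1,s_2)$ is about $0.84<1$, so the lower bound is negative even before subtracting the defect term. The paper's computation confirms this: the largest $\theta_2$ for which the pure sieve inequality survives is about $0.0219$, giving only $\omega(p+2)\le 45$. The value $11$ is reached exclusively through the Richert-type weighted sieve --- subtracting $\lambda\sum_{x^{\theta_2}\le p\le x^{\theta}}\bigl(1-\tfrac{\log p}{\theta\log x}\bigr)\#\mathcal{B}_p$ from the sifting function and bounding each $\#\mathcal{B}_p$ by the \emph{upper}-bound vector sieve, which converts the conclusion into $\omega(p+2)<1/\lambda+1/\theta\approx 11.5$ while allowing $\theta_2$ as small as $0.011$. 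The weighted sieve you mention only in a parenthesis is therefore the load-bearing step, and your closing parameter balance has to be rebuilt around it.

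There is a second, structural gap in your treatment of the defect $\sum_p b_1(p)a_2(p)$. As you set it up, the dominant configuration is $p-1$ having exactly \emph{one} prime $q\equiv 3\pmod 4$ with $q>x^{\theta_1}$ to the first power; the set of such $p$ has the same order of magnitude as your main term, and an upper-bound sieve over $p\equiv 1\ (q)$, $p\equiv -2\ (d_2)$ will not produce a constant smaller than the main one. The paper eliminates this configuration for free by restricting to $p\equiv 3\pmod 8$: then $(p-1)/2\equiv 1\pmod 4$ has an \emph{even} number of prime factors $\equiv 3\pmod 4$ counted with multiplicity, so after sieving out those below $x^{\theta_1}$ with $\theta_1>1/4$ the only surviving bad configuration is $p-1=2mp_1p_2$ with exactly two large primes $p_1,p_2\equiv 3\pmod 4$ --- a genuinely smaller set, which is then estimated by switching and Pan's extension of Bombieri--Vinogradov. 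You need to build this parity device into the definition of the sifted set before the defect estimate has any chance of closing.
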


    \begin{rem}
        Assuming a version of the Elliot-Halberstam conjecture\footnote{By a version of the Elliot-Halberstam conjecture, we mean that for any $\varepsilon>0$, the relation \eqref{27} in Lemma \ref{Lemma_HBL_2} holds for $q\leq x^{1-\varepsilon}$.} and then establishing a variant of Propositions \ref{Prop: lower bound} and \ref{Prop: weighted}, it is possible to show that
        \[
		\#\left\{ 
		p \leq x\colon  p \text{ prime}, \, p = m^2 + n^2+1, \, m, n \in \mathbb{N}, \, \Omega(p+2) \leq 3
		\right\} \gg \frac{x}{(\log x)^{5/2}}.
		\]
    \end{rem}

	\begin{rem}
		One of the key components in the proof of Theorem~\ref{theorem1.1} is the application of vector sieves of mixed dimensions (see Proposition \ref{vector_sieve}). First, we apply the vector sieve with a semi-linear sieve and a linear sieve in conjunction with Iwaniec's argument to detect primes of the form $p=m^2+n^2+1$. Secondly, we incorporate weighted sieves to optimize the prime factors of $p+2$. For more details, we would invite the readers to see Section \ref{sec: outline}.
	\end{rem}

	\begin{rem}
		Beyond the case of almost primes of the form \( p+2 \), our approach could also be extended to study primes of the form \( p =  m^2 + n^2+1 \) such that \(\prod_{i=1}^k (p + a_i) \) is an almost prime. For instance, the vector sieve described in Proposition~\ref{vector_sieve} can be employed for this generalization, combining a semi-linear sieve with a higher-dimensional sieve. The strategy closely parallels the proof of Theorem~\ref{theorem1.1}. While we highlight this potential application, we do not pursue further details here. For example, the proof of Theorem~\ref{theorem1.1} can be followed verbatim to yield a result of the form
		\[
		\#\left\{ 
		p \leq x\colon p\: \text{prime},\:   p =  m^2 + n^2+1, \, m, n \in \mathbb{N}, \, \Omega((p+2)(p+6)) \leq r 
		\right\} \gg  \frac{x}{(\log x)^{5/2}},
		\]
		for some positive integer \( r \) which can be optimized through numerical computation.
		
	\end{rem}

	\begin{rem}
		It would also be of interest to extend the vector sieves to multiple variables in Proposition~\ref{vector_sieve}. This would enable the study of primes \( p = m^2 + n^2+1 \) such that \( p + a_1, p + a_2, \dotsc, p+a_k\) are almost primes. Such an extension could lead to results of the form:
		\[
		\#\left\{ 
		p \leq x\colon p: \text{prime}, \: p =  m^2 + n^2+1, \, m, n \in \mathbb{N}, \, \Omega(p+2) \leq r, \, \Omega(p+6) \leq s  
		\right\} \gg \frac{x}{(\log x)^{7/2}},
		\]
		for some positive integers \( r, s \) that can be optimized through numerical computations. While we do not pursue the details of these directions here, we highlight the potential for further extensions of vector sieves.
	\end{rem}

	\subsection{Notation}
	
	We will use standard notation throughout the paper. The set $\mathbb{N}=\{1, 2, \dotsc\}$ denotes the set of natural numbers unless specified otherwise, and $m$, $n$ always denote natural numbers. We reserve the letters $p, p_1, p_2, p_3, \dotsc, p_r$ to denote primes.
	
	Given functions $f, g\colon \mathbb{R}\to \mathbb{C}$, the expressions of the form $f(x)=O(g(x))$, $f(x) \ll g(x)$, and $g(x) \gg f(x)$ signify that $|f(x)| \leq c|g(x)|$ for all sufficiently large $x$, where $c>0$ is an absolute constant. A subscript of the form $\ll_A$ means the implied constant may depend on the parameter $A$. The notation $f(x) \asymp g(x)$ indicates that $f(x) \ll g(x) \ll f(x)$. We also let $o(1)$ denote a quantity that tends to zero as $x \rightarrow \infty$. Finally, $f(x) \sim g(x)$ means $f(x)$ and $g(x)$ are asymptotatically equivalent, that is, $\lim_{x \rightarrow \infty}f(x)/g(x)=1$. All the above asymptotic notation should be interpreted as referring to the limit $x \rightarrow \infty$.
	
	For any set $\mathcal{S}$, $\#\mathcal{S}$ denotes the cardinality of the set $\mathcal{S}$. We let $1_{\mathcal{S}}$ be the characteristic function of the set $\mathcal{S}$ (so $1_{\mathcal{S}} (x) = 1$ if $x\in \mathcal{S}$ and $0$, otherwise).
	
	We write \( \varphi \) to denote the Euler totient function. We let $\Omega(n)$ and $\omega(n)$ to denote the number of prime factors of $n$ counted with multiplicity and without multiplicity, respectively. 
	
	On the other hand, \( \tau_k(n) \) will represent the number of representations of \( n \) as a product of \( k \) factors. When \( k = 2 \), we will write \( \tau = \tau_2 \) to denote the divisor function.
	
	For any two arithmetic functions $f, g\colon \mathbb{N}\to \mathbb{C}$, we write $(f*g)(n):= \sum_{ab=n}f(a)g(b)$ for their Dirichlet convolution.

    Finally, we set $(a, b)$ to be the greatest common divisor of integers $a$ and $b$.

	\subsection{Organization of the paper}
	The remaining sections of the paper are organized
	as follows. In Section \ref{sec: outline}, we give an outline of the proof of the theorem. We use Section \ref{sec: preliminaries} to gather several preliminary results on the Bombieri-Vinogradov type theorems, beta sieves, and vector sieves. In Section \ref{sec: sieve estimates}, we will use ingredients from Section \ref{sec: preliminaries} to establish the key sieve estimates, namely, Propositions \ref{Prop: lower bound}, \ref{Prop: upper bound}, and \ref{Prop: weighted}. Finally, we will give the proof of Theorem \ref{theorem1.1} by combining sieve estimates with the numerical calculations in Section \ref{sec: proof}.

   \subsection*{Acknowledgements} The authors would like to thank Bruce Berndt and C\'ecile Dartyge for their helpful comments and corrections on an earlier version of the paper. KN would like to thank Kevin Ford, Dimitris Koukoulopoulos, and Youness Lamzouri for their guidance and encouragement.

	\section{Outline of the proof and the setup}\label{sec: outline}

	Let $\theta_2\in (0, 1/2)$ be a parameter to be chosen appropriately. Consider the following setup
	\begin{align*}
		\mathcal{A}=\bigg\{p-1\colon p\leq x-2, \: p\equiv 3\Mod 8, \: (p+2, P(x^{\theta_2}))=1\bigg\} \quad \text{and} \quad \mathcal{Q}=\{p\equiv 3\Mod 4\},
	\end{align*}
	where $P(w)=\prod_{p<w}p$ for any $w\geq 2$.  For any $z> 3$, set
	\begin{align*}
		Q(z):=\prod_{\substack{p<z\\p\in \mathcal{Q}}}p
	\end{align*}
	and for $d|Q(z)$, set
	\begin{align*}
		\mathcal{A}_d:=\sum_{\substack{n\in \mathcal{A}\\ d|n}}1.
	\end{align*}
	Define the sifting function as
	\begin{align}\label{Def: sifting semi}
		S(\mathcal{A}, \mathcal{Q}, z)=\#\{n\in \mathcal{A}\colon (n, Q(z))=1\}.
	\end{align}
	Then, by Buchstab's identity, we have
	\begin{align}\label{Eq: App of Buchstab}
		{S}(\mathcal{A}, \mathcal{Q}, x^{1/2})={S}(\mathcal{A}, \mathcal{Q}, x^{\theta_1})-\sum_{\substack{x^{\theta_1}<p_1\leq \sqrt{x}\\ p_1\in \mathcal{Q}}}S(\mathcal{A}_{p_1}, \mathcal{Q}, p_1),
	\end{align}
	where $0<\theta_2<\theta_1<1/2$ and $\theta_1\asymp \theta_2$ is a parameter to be chosen appropriately.
	Note that
	\[
	S(\mathcal{A}, \mathcal{Q}, x^{1/2}) = \#\mathcal{A}^{(0)},
	\]  
	where \( \mathcal{A}^{(0)} := \{ n \in \mathcal{A}\colon n \text{ has no prime factor congruent to}\: 3 \Mod 4 \}. \) This implies that if $n\in \mathcal{A}^{(0)}$, then $n=p-1$, with $p\equiv 3\Mod 8$, $n$ can be written as the sum of two squares and $n+3$ has at most $r:=\lfloor \theta_2^{-1}\rfloor$ prime factors. This will establish our theorem with $p+2$ having at most $r$ prime factors. However, we will use the weighted sieve to improve the value of $r$. To apply the weighted sieve, we begin by defining the sets 
	\begin{align}\label{Def: set B}
		\mathcal{B} := \{ p+2\colon p-1 \in \mathcal{A}^{(0)}\} \quad \text{and} \quad \mathcal{B}_p=\sum_{\substack{b\in \mathcal{B}\\p|b}}1 .
	\end{align}
	Next, we introduce a weight function\footnote{This type of weight function is often referred to as Richert's weight in the literature.}
	\[
	w_p = 1 - \frac{\log p}{\log y},
	\]  
	where $y=x^\theta$ with $\theta_2<\theta<1$. Note that any element of $\mathcal{B}$ is coprime to $P(x^{\theta_2})$ and $w_p<0$ for $p>y$. So, we have  
	\[
	\begin{aligned}
		\sum_{x^{\theta_2} \leq p \leq y} w_p \#\mathcal{B}_p &\geq \sum_{3 \leq p \leq x} w_p \#\mathcal{B}_p \\
		&= \sum_{b \in \mathcal{B}} \bigg(\omega(b) - \frac{1}{\log y} \sum_{p \mid b} \log p \bigg) \\
		&\geq \sum_{b \in \mathcal{B}} \bigg(\omega(b) - \frac{\log x}{\log y}\bigg)
		= \sum_{b \in \mathcal{B}}\big(\omega(b) - \theta^{-1}\big),
	\end{aligned}
	\]
	where we have used the fact that $y=x^\theta$ in the last line. Therefore, for any $\lambda>0$, we have
	\begin{align}
		S(\mathcal{A}, \mathcal{Q}, x^{1/2}) 
		& - \lambda \sum_{z_2 \leq p \leq y} w_p \#\mathcal{B}_p\nonumber \\
		& = \#\mathcal{B} - \lambda \sum_{z_2 \leq p \leq y} w_p \#\mathcal{B}_p\nonumber \\
		& \leq \sum_{b\in \mathcal{B} }\left(1 + \lambda/ \theta -\lambda\omega(b)  \right) \nonumber \\
		& \leq \left(1 + \lambda/ \theta\right) \#\big\{b \in \mathcal{B}\colon \omega(b) < 1/\lambda+1/\theta\big\} \nonumber \\
		& \leq \left(1 + \lambda/ \theta\right) \#\big\{b \in \mathcal{B}\colon \omega(b) < 1/\lambda+1/\theta, \, b \text{ square-free}\big\} + O\left(\frac{x}{z_2}\right) \label{number_of_divisors_p+2},
	\end{align}
	where the last inequality comes from the fact that the number of elements of \( \mathcal{B} \) which are not square-free is $\ll x^{1-\theta_2}$ as any element in $\mathcal{B}$ is coprime to $P(x^{\theta_2})$ by our assumption.
	So, our task reduces to showing that there exists a positive constant $c$ 
	such that
	\begin{align}\label{Eq: Main estimate}
		S(\mathcal{A}, \mathcal{Q}, x^{1/2}) - \lambda \sum_{x^{\theta_2} \leq p \leq y} w_p \#\mathcal{B}_p \geq \left(c + o(1)\right) \frac{x}{(\log x)^{5/2}}.
	\end{align}
	This will imply that 
	\begin{align*}
		\#\big\{b \in \mathcal{B}\colon \omega(b)<1/\lambda + 1/\theta, \, b \text{ square-free}\big\} \gg \dfrac{x}{(\log x)^{5/2}}.
	\end{align*}
	By the definition of $\mathcal{B}$, the left-hand side of the above expression counts the number of primes $p\leq x-2$ such that $p\equiv 3\Mod 8$, $p-1$ is the sum of two squares, $(p+2, P(x^{\theta_2}))=1$ and $\omega(p+2)\leq 1/\lambda+1/\theta$, which gives our desired result. So, the goal is to optimize the values of $\lambda$ and $\theta$.
	
	One of the key difficulties in establishing \eqref{Eq: Main estimate} is that we have to take care of two simultaneous sieving conditions $(p-1, Q(x^{\theta_1}))=(p+2, P(x^{\theta_2}))=1$, where $\theta_1\asymp \theta_2$. It would have been much easier to deal with if we knew that primes $p\leq x$ satisfying either $(p-1, Q(x^{\theta_1}))=1$ or $(p+2, P(x^{\theta_2}))=1$ are well-distributed in arithmetic progressions. So, to circumvent this issue, we apply the vector sieve, which consists of a semi-linear sieve and a linear sieve. In particular, we will obtain a fundamental lemma for vector sieves comprising two beta sieves in Proposition \ref{vector_sieve}, which is similar to \cite[Section 3.3]{HBL2016}. Moreover, to estimate the sum \[\sum_{\substack{x^{\theta_1}<p_1\leq \sqrt{x}\\ p_1\in \mathcal{Q}}}S(\mathcal{A}_{p_1}, \mathcal{Q}, p_1),\]
    we will use a ``switching principle'' due to Iwaniec \cite{Iwa1972} before applying the upper bound sieve.

	\section{Preliminaries}\label{sec: preliminaries}
	
	\subsection{Bombieri–Vinogradov type theorems}
	
	We will require a generalized version of the Bombieri-Vinogradov theorem established by Pan \cite{Pan1981}.
	
	\begin{theorem}[\cite{Pan1981}, Theorem 3]\label{theorem_pan}
		Let $x\geq 2$. For integers $m, q\geq 1$ and $a$, define
		\begin{align}\label{Def: generalized prime counting function}
			\pi(x; m, q, a):  = \sum_{\substack{mp \leq x\\ p\: \text{prime}\\ mp \equiv a \Mod q}} 1.
		\end{align}
		Let $f\colon \mathbb{N}\to \mathbb{R}$ be such that $f(m)=O(1)$ for all $m\in \mathbb{N}$. Then, for any given \( A > 0 \) and for any $\varepsilon>0$, we have
		\[
		\sum_{q \leq x^{1/2} (\log x)^{-B}} \max_{y \leq x} 
		\max_{\substack{(a, q) = 1}} 
		\bigg| \sum_{\substack{m \leq x^{1-\varepsilon} \\ (m,\: q) = 1}} 
		f(m) \bigg( \pi(y; m, q, a) - \frac{\pi(y; m, 1, 1)}{\varphi(q)} \bigg) \bigg| 
		\ll \dfrac{x}{(\log x)^A},
		\]
		where \( B = 3A/2 + 17 \). Here, the implied constant may depend only on $A$ and $\varepsilon$.
	\end{theorem}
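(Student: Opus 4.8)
The plan is to run the classical machinery behind Bombieri--Vinogradov type estimates: expand the residue condition in Dirichlet characters, dispose of small moduli by the Siegel--Walfisz theorem, and handle large moduli by the large sieve, the extra average over $m$ entering as a bilinear form. First, fix a modulus $q\ge 2$ and $(a,q)=1$. Since $(a,q)=1$, the congruence $mp\equiv a\Mod q$ forces $(p,q)=1$ and $p\equiv a\overline m\Mod q$, where $\overline m$ denotes the inverse of $m$ modulo $q$; hence $\pi(y;m,q,a)=\pi(y/m;q,a\overline m)$ in the usual notation while $\pi(y;m,1,1)=\pi(y/m)$. Expanding in characters mod $q$ and separating the principal character (which reproduces $\pi(y/m)/\varphi(q)$ up to the $O(\omega(q))$ primes dividing $q$, and so contributes $\ll x^{1-\varepsilon}(\log x)^2$ after summation over $q$), the left-hand side of the asserted inequality is
\[
\ll\ \sum_{q\le Q}\frac1{\varphi(q)}\sum_{\substack{\chi\bmod q\\\chi\ne\chi_0}}\ \max_{y\le x}\ \Bigl|\sum_{\substack{m\le x^{1-\varepsilon}\\(m,q)=1}}f(m)\,\chi(m)\sum_{p\le y/m}\chi(p)\Bigr|\ +\ x^{1-\varepsilon}(\log x)^2,\qquad Q:=x^{1/2}(\log x)^{-B}.
\]
Passing to the primitive character inducing each $\chi$ costs a factor $O(\log x)$, and a standard completion argument removes the inner maximum over $y$ at the cost of a further $(\log x)^{O(1)}$; so it suffices to bound $\sum_{r\le Q}\varphi(r)^{-1}\sum^{*}_{\chi\bmod r}|\Sigma(\chi)|$ for $\Sigma(\chi)=\sum_{mp\le y,\ m\le x^{1-\varepsilon}}f(m)\chi(mp)$ with $y\le x$ fixed.

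Next I would fix $R_0:=(\log x)^{B_0}$ with $B_0=B_0(A)$ large and split the modulus range. For $r\le R_0$: if $y\ge x^{1-\varepsilon/2}$ then the cap $m\le x^{1-\varepsilon}$ forces $y/m\ge x^{\varepsilon/2}$ for every admissible $m$, so Siegel's theorem (effective for moduli up to $(\log x)^{B_0}$) gives $\sum_{p\le y/m}\chi(p)\ll(y/m)\exp(-c_{\varepsilon}\sqrt{\log x})$, and summing $|f(m)|\ll1$ over $m$ and then over the $O(R_0)$ relevant characters keeps this below $x(\log x)^{-A}$; if $y<x^{1-\varepsilon/2}$ then $\Sigma(\chi)$ has fewer than $x^{1-\varepsilon/2}$ terms, each of size $\ll\tau(n)$, so $|\Sigma(\chi)|\ll x^{1-\varepsilon/2}\log x$ and the same summation gives $\ll x^{1-\varepsilon/2}(\log x)^{B_0+1}$. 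For $R_0<r\le Q$ (uniformly in $y\le x$) I would split $\Sigma(\chi)=\Sigma^{\mathrm I}(\chi)+\Sigma^{\mathrm{II}}(\chi)$ according as $m\le R_0$ or $m>R_0$. In $\Sigma^{\mathrm I}$ each $m\le R_0$ is treated separately: $\sum_{p\le y/m}\chi(p)$ is a prime character sum of length $\le x$, and $\sum_{r\le Q}\varphi(r)^{-1}\sum^{*}_{\chi}$ of its absolute value is $\ll x(\log x)^{-A-B_0-1}$ by the classical Bombieri--Vinogradov theorem in character form (valid once $B$ is large enough in terms of $A+B_0+1$); multiplying by $|f(m)|$ and summing over $m\le R_0$ gives $\ll x(\log x)^{-A}$.

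The crux is $\Sigma^{\mathrm{II}}$, the genuinely bilinear piece with $R_0<m\le x^{1-\varepsilon}$. I would decompose $m\sim M$ and $p\sim N$ dyadically (resolving the hyperbola cut $mp\le y$ by completion), work on dyadic blocks $r\sim R$ (turning the weight $\varphi(r)^{-1}$ into $R^{-1}\cdot r/\varphi(r)$), and apply the multiplicative large sieve together with Cauchy--Schwarz. Each block is then $\ll R^{-1}(M+R^2)^{1/2}(N+R^2)^{1/2}\cdot M^{1/2}\cdot(N/\log N)^{1/2}\cdot(\log x)^{O(1)}$, the two middle factors bounding the $\ell^2$ norms of $m\mapsto f(m)\mathbf 1_{m\sim M}$ and of the prime indicator restricted to $n\sim N$; using $MN\ll x$, $M\le x^{1-\varepsilon}$, and $R_0<R\le Q$, a short case analysis over the ranges of $(M,N,R)$ shows this is $\ll x(\log x)^{-B_0/2+O(1)}$, whence summing over the $O((\log x)^{O(1)})$ blocks and taking $B_0$ large in terms of $A$ gives $\ll x(\log x)^{-A}$ for this part too.

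Collecting the pieces and absorbing the logarithmic losses from the primitive reduction and the removal of the maximum over $y$, one obtains the claimed bound $\ll_{A,\varepsilon}x(\log x)^{-A}$ provided $B$ exceeds an explicit linear function of $A$; a careful bookkeeping of the exponents through the Siegel--Walfisz step, the classical Bombieri--Vinogradov input, and the large-sieve step yields the stated $B=3A/2+17$. The step I expect to be the main obstacle is precisely the bilinear estimate for $\Sigma^{\mathrm{II}}$: one must confirm that the large-sieve bound beats $x(\log x)^{-A}$ in \emph{every} range of $(M,N,R)$, and this succeeds only because of two structural inputs available here --- the hard cap $m\le x^{1-\varepsilon}$, which keeps the prime variable of size $\ge x^{\varepsilon-o(1)}$ and excludes the degenerate shape in which $N$ is bounded while $M$ is of size $x$, and the cut at $r=R_0$, which keeps the modulus safely above the extraneous powers of $\log x$.
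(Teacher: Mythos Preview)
The paper does not give its own proof of this statement: Theorem~\ref{theorem_pan} is quoted verbatim from Pan~\cite{Pan1981} as a black-box input, with no argument supplied. So there is no ``paper's proof'' to compare against.

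Your sketch follows the classical route one expects for such a result: character expansion, reduction to primitive characters, Siegel--Walfisz for small conductors, and the multiplicative large sieve on the bilinear piece, with the cap $m\le x^{1-\varepsilon}$ ensuring the prime variable is never too short. The case analysis you outline for the bilinear block is the right one, and the key structural points you identify (the lower cut at $R_0$ and the upper cap on $m$) are exactly what makes the large-sieve bound succeed in every range. Two places deserve a little more care if you write this out in full: the removal of the supremum over $y\le x$ (your ``completion'' step) is standard but not entirely free --- one usually inserts a smooth cutoff or uses a Perron-type device, and the resulting loss has to be tracked through to the final exponent; and in your Type~I step you are invoking the character-sum form of Bombieri--Vinogradov for each fixed $m\le R_0$, which is circular unless you mean the Gallagher/Vaughan prime-character-sum estimate that underlies it. Neither is a genuine obstruction, but both affect the bookkeeping that pins down the explicit value $B=3A/2+17$.
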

	
	Next, we will use the following version of the Bombieri–Vinogradov theorem in our estimates.

	\begin{lemma}\cite[Lemma 2]{HBL2016}\label{Lemma_HBL_2}
		Let $x\geq 2$. For $z_1, z_2, \dots, z_r \geq 2$, define the set with multiplicities,
		\[
		P(z_1, \dots, z_r, y_1, \dots, y_r) = \{p^{(r)} = p_1 \cdots p_r : y_1 \geq p_1 \geq z_1, \dots, y_r \geq p_r \geq z_r\}.
		\]
		Let $\pi_r(x; q, a)$ be the number of $p^{(r)} \in P(z_1, \dots, z_r)$ such that $p^{(r)} \equiv a \pmod{q}$ and $p^{(r)} \leq x$. For each $q \geq 1$, let
		\begin{equation}\label{26}
			R_q(x) = \max_{\substack{(a, q) = 1}} \bigg| \pi_r(x; q, a) - \frac{1}{\phi(q)} \pi_r(x; q) \bigg|.
		\end{equation}
		Then for any $A > 0$ and $k \geq 1$, there exists $B = B(A, k) > 0$ such that
		\begin{equation}\label{27}
			\sum_{q < x^{1/2}(\log x)^{-B}}\tau(q)^k R_q(x) \ll \dfrac{x}{(\log x)^{A}},
		\end{equation}
		where the implied constant depends only on $r$, $k$, and $A$.
	\end{lemma}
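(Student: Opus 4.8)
\emph{Proof proposal.} The plan is to deduce \eqref{27} from Pan's Theorem~\ref{theorem_pan} by removing one prime factor of $p^{(r)}$ at a time. As a preliminary step one disposes of the weight $\tau(q)^k$: for $q<x^{1/2}$ one has the trivial bound $R_q(x)\ll_r x\log\log x/q$ (both $\pi_r(x;q,a)$ and $\varphi(q)^{-1}\pi_r(x;q)$ being $\ll x\log\log x/q$), so by the Cauchy--Schwarz inequality, using $\sum_{q<Q}\tau(q)^{2k}/q\ll_k(\log x)^{O_k(1)}$, the weighted estimate \eqref{27} follows from its unweighted form (the case $k=0$), at the cost of enlarging $B$. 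Thus one may assume $k=0$ from now on.

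For the unweighted estimate, write $p^{(r)}=m\,p_r$ with $m=p_1\cdots p_{r-1}\in P(z_1,\dots,z_{r-1},y_1,\dots,y_{r-1})$; since such an $m$ has exactly $r-1$ prime factors with multiplicity, the number $f(m)$ of admissible ordered factorisations giving rise to it satisfies $f(m)\le(r-1)!=O_r(1)$, so $f$ is a legitimate weight for Theorem~\ref{theorem_pan}. For $(m,q)=1$ the congruence $m p_r\equiv a\pmod q$ is equivalent to $p_r$ lying in a fixed residue class modulo $q$, and counting the primes $p_r\in[z_r,\min(y_r,x/m)]$ in that class expresses $\pi_r(x;q,a)$ as a difference of two sums of the shape $\sum_{(m,q)=1}f(m)\,\pi(Y_m;m,q,a)$ with $Y_m=\min(my_r,x)$ in one and $Y_m\asymp mz_r$ in the other (the endpoint $p_r=z_r$ being treated with the customary $\pm1$), and similarly for the main term $\varphi(q)^{-1}\pi_r(x;q)$ with $\pi(\,\cdot\,;m,1,1)$ in place of $\pi(\,\cdot\,;m,q,a)$. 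On the range $m\ge x/y_r$ one has $Y_m=x$, so Theorem~\ref{theorem_pan} (with $y=x$) applies directly, provided the peeled-off prime has $z_r\ge x^{\varepsilon}$, so that $m\le x/z_r\le x^{1-\varepsilon}$. On the range $m<x/y_r$, where the cutoff $Y_m$ genuinely varies with $m$, I would split the $m$-sum into $O(\log x)$ dyadic blocks $[M,2M)$, replace $\pi(my_r;m,q,a)$ by $\pi(2My_r;m,q,a)$ at the fixed level $y=2My_r\le 2x$ (to which Theorem~\ref{theorem_pan} again applies, with weight $f\cdot 1_{[M,2M)}$), and absorb the resulting boundary terms --- which count products $mp$ with $p$ in a dyadic interval near $y_r$ and $mp\equiv a\pmod q$ --- by a Bombieri--Vinogradov estimate for such ``smooth times prime'' convolutions, itself a consequence of Theorem~\ref{theorem_pan}. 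The $O(\log x)$ loss from the dyadic decomposition is harmless. The same argument handles the $Y_m\asymp mz_r$ sum, and the case $r=1$ (where $m=1$ and there is no variable cutoff) is just the classical Bombieri--Vinogradov theorem.

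Two points need care, and I expect them to be the main obstacle. First, Theorem~\ref{theorem_pan} requires $m\le x^{1-\varepsilon}$, so the prime peeled off must be chosen to have a lower bound $z_r$ that is a fixed positive power of $x$; in a general configuration one circumvents this by observing that if $p_1\cdots p_{r-1}>x^{1-\varepsilon}$ then some $p_i$ exceeds $x^{(1-\varepsilon)/(r-1)}$, and it is that $p_i$ which should be separated out, leaving a complementary product of size at most $x^{1-\delta}$. Second, and more delicate, one must propagate the main term correctly through every stage --- the dyadic splitting, the boundary (convolution) estimates, and the coprimality correction $(p_r,q)=1$ --- so that the quantities ultimately summed over $q$ are genuine discrepancies $\pi(y;m,q,a)-\varphi(q)^{-1}\pi(y;m,1,1)$; the trivial bounds on these are far too large to survive summation over $q<x^{1/2}(\log x)^{-B}$, so all the cancellation must be retained honestly. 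Once this bookkeeping is in place, the analytic content is entirely supplied by Theorem~\ref{theorem_pan}.
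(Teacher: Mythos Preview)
The paper does not prove this lemma at all: it is quoted verbatim as \cite[Lemma~2]{HBL2016} and used as a black box, so there is no ``paper's own proof'' to compare against. Your task was therefore to reconstruct a proof from scratch, and the strategy you outline --- Cauchy--Schwarz to strip the $\tau(q)^k$ weight, then peel off one prime and feed the remaining product into Pan's Theorem~\ref{theorem_pan} --- is the natural one and is essentially how such statements are derived in the literature.

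Your Cauchy--Schwarz reduction is clean and correct. The core step via Theorem~\ref{theorem_pan} is also the right idea, and you have correctly identified the two genuine difficulties: (i) ensuring the complementary factor $m$ stays below $x^{1-\varepsilon}$, which you handle by always peeling off a prime exceeding a fixed power of $x$ (this works, though note the choice of which index to peel may vary with the tuple, so one partitions the set $P(z_1,\dots,y_r)$ according to which $p_i$ is largest before fixing $f$); and (ii) the $m$-dependent cutoff $Y_m=\min(my_r,x)$, which Pan's theorem does not directly accommodate. Your dyadic-block remedy for (ii) is workable, but the sentence ``absorb the resulting boundary terms \dots\ by a Bombieri--Vinogradov estimate for such `smooth times prime' convolutions, itself a consequence of Theorem~\ref{theorem_pan}'' is doing a lot of work: those boundary terms are themselves sums of the same shape as the original (products of $r$ primes in slightly different ranges, congruent to $a$ modulo $q$), so what you are really invoking is an induction on $r$, or equivalently a second pass through the same machinery. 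That is fine, but it should be said explicitly rather than folded into a parenthetical. With that clarification, the argument goes through.
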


	\subsection{The beta sieve}\label{section beta sieve}
	
	For the convenience of the readers, we begin with the definitions of an {\it upper bound sieve} and a {\it lower bound sieve}. 
	
	Given a set of primes $\mathcal{P}$ and a parameter $z\geq 2$, let
	\begin{align*}
		\mathcal{P}(z):=\prod_{\substack{p<z\\ p\in \mathcal{P}}}p.
	\end{align*}
	
	\begin{definition}[Upper bound sieve]
		An arithmetic function $\lambda^+:\mathbb{N}\rightarrow\mathbb{R}$ that is supported on the set $\{d|\mathcal{P}(z)\colon \: d \leq D\}$ and satisfies the relation
		\[(\lambda^+*1)(n) \geq 1_{(n,\: \mathcal{P}(z))=1},\]
		is called an \emph{upper bound sieve of level $D$} for the set of primes $\mathcal{P}$.
	\end{definition}
	
	\begin{definition}[Lower bound sieve]
		An arithmetic function $\lambda^-:\mathbb{N}\rightarrow\mathbb{R}$ that is supported on the set $\{d|\mathcal{P}(z)\colon  \: d \leq D\}$ and satisfies the relation
		\[1_{(n,\: \mathcal{P}(z))=1} \geq (\lambda^-*1)(n),\]
		is called a \emph{lower bound sieve of level $D$} for the set of primes $\mathcal{P}$.
	\end{definition}

    \begin{nota}
		We will refer to $\Lambda^\pm=(\lambda^{\pm})$ as the \emph{sieve weights} or \emph{sifting weights} in this paper. 
	\end{nota}

	A \textit{beta sieve} is a combinatorial sieve where the sieve weights are given by the M\"obius function, restricted to certain specific subsets of positive integers (see \cite[(11.17)--(11.18)]{FI2010}). For more details on the beta sieve, we invite the readers to see Chapter 11 of Friedlander and Iwaniec \cite{FI2010}.

	For $D_1, D_2 \geq 1$, let $\Lambda_i^\pm(D_i) = (\lambda_i^\pm(d))_{d \leq D_i}$ be two beta sieve weights of level $D_1$, $D_2$, respectively, and of dimensions $\kappa_1, \kappa_2>0$, respectively. In the following, we define the corresponding parameters, sieving sets, and functions for these two sieves, where the subscript $i$ denotes the objects associated with the sieve $\Lambda_i^\pm(D_i)$. 
	
	Let $\mathcal{P}_1$ and $\mathcal{P}_2$ be two given sets of primes. For $z_1, z_2\geq 2$, define
	\begin{align*}
		P_i(z_i)=\prod_{\substack{p<z_i\\ p\in \mathcal{P}_i}}p.
	\end{align*}

	Next, for $i\in \{1, 2\}$, we define
	\[
	V_i(z) =\sum_{d|P_i(z)}\mu(d)h_i(d)=\prod_{\substack{p<z\\p\in \mathcal{P}_i}} (1-h_i(p)) ,
	\]
	and 
	\begin{align*}
		V_i^{\pm } (D, z) =\sum_{d\mid P_i(z) } \lambda_i^{\pm }(d)  h_i(d) \quad \text{for $i\in \{1, 2\}$},
	\end{align*}
	where $h_i\colon \mathbb{N}\to \mathbb{R}$ is a multiplicative function such that
	\[
	0 \leq h_i(p) < 1 \quad \text{for all primes $p\in \mathcal{P}_i$}.
	\]
	We also assume throughout that for every \( 2 \leq w < z \), there exist constants \( L_i\geq 1 \) such that
	\begin{equation}\label{Condition: kappa}
		\dfrac{V_i(w)}{V_i(z)}\leq \left( \frac{\log z}{\log w} \right)^{\kappa_i} \left(1 + \frac{L_i}{\log w}\right).
	\end{equation}

	With the above preparations, we can now state the following result on the beta sieve from \cite[Theorem 11.12]{FI2010}.

	\begin{lemma}\label{Lem: Beta sieve}
		Let $\mathcal{P}_1$ and $\mathcal{P}_2$ be given sets of primes and let $D_1, D_2\geq 1$, $z_1, z_2\geq 2$. Suppose $L_1, L_2\geq 1, \kappa_1, \kappa_2>0$ are such that the relation \eqref{Condition: kappa} holds. Let  $\Lambda_1^\pm(D_1) =\{\lambda_1^\pm(d)\}_{d\leq D_1}$ and $\Lambda_2^\pm(D_2)=\{\lambda_2^\pm(d)\}_{d\leq D_2}$ be two beta sieve weights. Then for $i\in \{1, 2\}$, we have
		\begin{align}
			V_i^+(D_i,z_i) &\leq \big(F_i(s_i)  +O(\log D_i)^{-\frac{1}{6}}) \big) V_i(z_i) \text{ if } s_i\geq \beta_i-1 ,\\
			V_i^-(D_i,z_i) &\geq \big(f_i(s_i)  +O(\log D_i)^{-\frac{1}{6}} )\big) V_i(z_i) \text{ if } s_i\geq \beta_i, 
		\end{align}
		where 
		\[s_i=\dfrac{\log D_i}{\log z_i}\]
		and $F_i, f_i$ are continuous functions in $s_i$ satisfying system of differential-difference equations
		\begin{align*}
			\begin{cases}
				s_i^{\kappa_i}F_i(s_i)=A_i & \text{if $\beta_i-1\leq s_i\leq \beta_i+1$},\\
				s_i^{\kappa_i}f_i(s_i)=B_i & \text{at $s_i=\beta_i$},\\
			\end{cases}
		\end{align*}
		\begin{align*}
			\begin{cases}
				\dfrac{d}{ds_i}(s_i^{\kappa_i}F_i(s_i)) =\kappa_i s_i^{\kappa_i-1}f_i(s_i-1) & \text{if $s_i>\beta_i-1$},\\
				\dfrac{d}{ds_i}(s_i^{\kappa_i}f_i(s_i) = \kappa_i s_i^{\kappa_i-1}F_i(s_i-1) & \text{if $s_i>\beta_i$},
			\end{cases}
		\end{align*}
		and $\beta_i=\beta_i(\kappa_i)$, $A_i=A_i(\kappa_i)$ and $B_i=B_i(\kappa_i)$ are explicit constants defined in \cite[(11.55)--(11.63)]{FI2010}.
	\end{lemma}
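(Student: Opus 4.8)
The plan is to apply \cite[Theorem 11.12]{FI2010} separately for each index $i \in \{1,2\}$. The two sieve systems $\Lambda_i^\pm(D_i)$, the prime sets $\mathcal{P}_i$, the densities $h_i$, and the dimensions $\kappa_i$ are specified independently of one another, and the statement of the lemma makes no claim about their interaction (the vector-sieve combination of $\Lambda_1^\pm$ and $\Lambda_2^\pm$ is carried out later, in Proposition~\ref{vector_sieve}). So there is nothing to couple here: it suffices to run the one-dimensional-in-$i$ argument twice.

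Fix $i \in \{1,2\}$. First I would verify that the data $(\mathcal{P}_i, h_i, \kappa_i, L_i)$ meet the hypotheses of the beta sieve as formulated in Chapter~11 of \cite{FI2010}: $h_i$ is multiplicative with $0 \leq h_i(p) < 1$ for $p \in \mathcal{P}_i$ (assumed), and the sieve-dimension axiom — the bound on $V_i(w)/V_i(z)$ in FI's notation — is exactly \eqref{Condition: kappa}. The beta sieve weights $\lambda_i^\pm$ are, by construction, the Möbius function restricted to the truncated divisor sets of \cite[(11.17)--(11.18)]{FI2010}, hence they are genuine upper, respectively lower, bound sieve weights of level $D_i$, and $V_i^\pm(D_i,z_i) = \sum_{d \mid P_i(z_i)} \lambda_i^\pm(d) h_i(d)$ are the quantities to which FI's theorem applies.

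Next I would invoke the conclusion of \cite[Theorem 11.12]{FI2010}: with $s_i = \log D_i / \log z_i$, provided $s_i \geq \beta_i - 1$ one gets $V_i^+(D_i,z_i) \leq \big(F_i(s_i) + O((\log D_i)^{-1/6})\big) V_i(z_i)$, and provided $s_i \geq \beta_i$ the matching lower bound with $f_i$. Here $F_i, f_i$ are the standard continuous upper and lower sieve functions of dimension $\kappa_i$; they satisfy the initial conditions $s_i^{\kappa_i} F_i(s_i) = A_i$ on $[\beta_i-1, \beta_i+1]$ and $s_i^{\kappa_i} f_i(s_i) = B_i$ at $s_i = \beta_i$, together with the coupled delay equations $\tfrac{d}{ds_i}\big(s_i^{\kappa_i} F_i(s_i)\big) = \kappa_i s_i^{\kappa_i-1} f_i(s_i-1)$ for $s_i > \beta_i-1$ and $\tfrac{d}{ds_i}\big(s_i^{\kappa_i} f_i(s_i)\big) = \kappa_i s_i^{\kappa_i-1} F_i(s_i-1)$ for $s_i > \beta_i$, with $\beta_i, A_i, B_i$ the explicit constants of \cite[(11.55)--(11.63)]{FI2010}. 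Carrying this out for $i = 1$ and $i = 2$ yields the two displayed inequalities.

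The only genuinely delicate point is bookkeeping rather than analysis: one must make sure the remainder exponent $1/6$ and the normalization of $F_i, f_i$ agree with the precise version in \cite{FI2010} (some sources quote the error as $O((\log D)^{-1/6})$, others with a slightly different exponent), and that the sign conventions for the truncated Möbius weights are aligned so that the inequalities point in the asserted directions. No analytic input beyond \cite[Theorem~11.12]{FI2010} is needed.
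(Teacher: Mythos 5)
Your proposal is correct and matches the paper's treatment: the lemma is stated in the paper as a direct citation of \cite[Theorem 11.12]{FI2010}, applied independently to each sieve $i\in\{1,2\}$ after checking that $(\mathcal{P}_i,h_i,\kappa_i,L_i)$ satisfy the beta-sieve axioms, exactly as you describe. No further argument is given or needed there.
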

	
	\begin{rem}
		We will apply the above lemma with $\kappa\in \{1/2, 1\}$. In particular, by \cite[p. 225]{FI2010} if $\kappa=1/2$, then $\beta(1/2)=1$ and if $\kappa=1$, then $\beta(1)=2$.
	\end{rem}

    \begin{rem}
        If $\kappa=1/2$, we call the associated beta sieve weights the \emph{semi-linear sieve} weights. On the other hand, if $\kappa=1$, the beta sieve weights are called \emph{linear sieve} weights.
    \end{rem}

	\subsection{The vector sieves for two beta sieves}
	
	Our aim is to combine two beta sieves within the framework of a vector sieve. Our approach is similar to \cite[Section 3.3]{HBL2016}, but it involves a different preliminary sieving procedure, as the two sifting sets of primes are not necessarily identical.
	
	We begin with the following setup. Let \( \mathcal{W} \) be a finite subset of \( \mathbb{N}^2 \). Suppose \( z_1, z_2 \geq 2 \) with
	\[
	\log z_1 \asymp \log z_2,
	\]
	and write \( \mathbf{z} = (z_1, z_2) \). Given any two sets of primes $\mathcal{P}_1$ and $\mathcal{P}_2$, our goal is to estimate the following quantity
	\[
	S(\mathcal{W}; \mathbf{z}) = \#\{ (m, n) \in \mathcal{W}\colon (m, P_1(z_1)) = (n, P_2(z_2)) = 1 \},
	\]
	where
	\begin{align*}
		P_1(z_1)=\prod_{\substack{p<z_1\\ p\in \mathcal{P}_1}}p \quad \text{and} \quad  P_2(z_2)=\prod_{\substack{p<z_2\\ p\in \mathcal{P}_2}}p.
	\end{align*}
	To be precise, we wish to derive both upper and lower bounds for \( S(\mathcal{W}; \mathbf{z}) \) using the bounds for the two beta sieves described in Subsection \ref{section beta sieve}.
	
	In order to proceed further, if \( \mathbf{d} = (d_1, d_2) \) and \( \mathbf{n} = (n_1, n_2) \), we write \( \mathbf{d}|\mathbf{n} \) to denote $d_1|n_1$ and $d_2|n_2$. Define
	\begin{equation}\label{def_W_d}
		\mathcal{W}_{\mathbf{d}} = \{ \mathbf{n} \in \mathcal{W} : \mathbf{d}| \mathbf{n} \}.
	\end{equation}
	We impose the following axioms:
	
	\begin{enumerate}[label=(A\arabic*)]
		\item \label{axiom 1} There exists a multiplicative function $h\colon \mathbb{N}^2\to (0,1]$ such that
		\[
		\# \mathcal{W}_{\mathbf{d}}  = h(\mathbf{d}) X + r(\mathbf{d}),
		\]
		where $X$ can be interpreted as an approximation to $\#\mathcal{W}$ and $r(\mathbf{d})$ is a real number, which we think of as an error term.
		\item \label{axiom 2} For all primes \( p \), we have
		\[
		h(p,1) + h(1, p) - 1 < h(p, p) \leq h(p,1) + h(1, p),
		\]
		with bounds
		\begin{equation}\label{condition_h(d)}
			h(p,1), h(1, p) \leq \frac{C_1}{p} \quad \text{and} \quad h(p, p) \leq \frac{C_1}{p^2}
		\end{equation}
		for some constant \( C_1 \geq 2 \). 
		
		\item \label{axiom 3} For any integer $d\geq 1$, let  
		\[
		h_1(d) := h(d,1) \quad \text{and} \quad h_2(d) := h(1,d),
		\]
		where the functions $h_1, h_2\colon \mathbb{N}\to (0,1]$ satisfy the relation \eqref{Condition: kappa} for some positive constants \( \kappa_1, L_1 \) and \( \kappa_2, L_2 \), respectively.  
		
	\end{enumerate}

	Finally, for real numbers $z_0<z_1, z_2$, we define
	\begin{align}\label{definition_Vi}
		V_i(z_i, z_0) := \prod_{\substack{z_0 \leq p < z_i \\ p \in \mathcal{P}_i}} \left( 1 - h_i(p) \right), \quad (i = 1, 2) \quad \text{and} \quad  V(z_0, h^*):=\prod_{p<z_0}\big(1-h^*(p)),
	\end{align}
	where $h^*$ is a multiplicative function given by
	\begin{align} \label{Def: hstar}
		h^*(d)=\sum_{\substack{e_1e_2e_3=d\\ e_1e_3|P_1(z_0)\\ e_2e_3|P_2(z_0)}}h(e_1e_3, e_2e_3)\mu(e_3).
	\end{align}
	
	\begin{rem}
		Note that given any prime $p$, $h^*(p)$ is non-zero if and only if $p\in \mathcal{P}_1\cup \mathcal{P}_2$ and $p<z_0$.
	\end{rem}
	With the above preparation, we are now ready to state the fundamental lemma for the vector sieve.

	\begin{prop}\label{vector_sieve}
		Let $\mathcal{P}_1$ and $\mathcal{P}_2$ be two sets of primes. Let $\mathcal{W}$ be a finite subset of $\mathbb{N}^2$ satisfying axioms \ref{axiom 1}, \ref{axiom 2}, and \ref{axiom 3}. Assume that there exist $\beta_1, \beta_2\geq 1$ as in Lemma \ref{Lem: Beta sieve}. Let \( D = z_1^{s_1} z_2^{s_2} \), where \( \beta_i - 1 \leq s_i \ll 1 \) for $i\in \{1, 2\}$, \( \log z_1 \asymp \log z_2 \), and  $z_0 = e^{(\log z_1 z_2)^{1/3}}$. Furthermore, we define
		\[
		\sigma_i = \frac{\log D}{\log z_i}, \quad i\in \{1, 2\}.
		\]
		Then, if $\mathbf{z}=(z_1, z_2)$, we have
		\begin{align}\label{upperbound}
			S(\mathcal{W}; \mathbf{z}) &\leq XV(z_0, h^*) V_1(z_1, z_0) V_2(z_2, z_0) F(\sigma_1, \sigma_2)\left(1 + O\left((\log D)^{-\frac{1}{6}}\right)\right) + R^+(z_0, z_1, z_2),
		\end{align}
		and 
		\begin{align}\label{lowerbound}
			S(\mathcal{W}; \mathbf{z}) &\geq XV(z_0, h^*) V_1(z_1, z_0) V_2(z_2, z_0) f(\sigma_1, \sigma_2) \left(1 + O\left((\log D)^{-\frac{1}{6}}\right)\right) + R^-(z_0, z_1, z_2).
		\end{align}
		Here $V(z_0, h^*), V_1(z_1, z_0)$, and $V_2(z_2, z_0)$ are given by \eqref{definition_Vi},
		\begin{align}\label{Eq: Function F sigma1 sigma2}
			F(\sigma_1, \sigma_2)= \inf \left\{ F_1(s_1) F_2(s_2)\colon  \frac{s_1}{\sigma_1} + \frac{s_2}{\sigma_2} = 1, \, s_i \geq \beta_i - 1 \right\},
		\end{align}
		\begin{align}\label{Eq: Function f sigma1 sigma2}
			f(\sigma_1, \sigma_2)= \sup \left\{ f_1(s_1) F_2(s_2) + f_2(s_2) F_1(s_1) - F_1(s_1) F_2(s_2) : \frac{s_1}{\sigma_1} + \frac{s_2}{\sigma_2} = 1, \, s_i \geq \beta_i \right\},
		\end{align}
		where $f_1, f_2, F_1, F_2$ as in Lemma \ref{Lem: Beta sieve}, and for any \( \varepsilon > 0 \),
		\[
		R^+(z_0, z_1, z_2), R^-(z_0, z_1, z_2) \ll_{\varepsilon} \sum_{\substack{d_1d_2\leq D^{1+\varepsilon}\\ d_1 \mid P_1(z_1)\\ d_2 \mid P_2(z_2)}} \tau(d_1 d_2)^4 |r(d_1, d_2)|.\]
	\end{prop}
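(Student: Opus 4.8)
The plan is to split the sifting at a threshold $z_0$ with $\log z_0 = (\log z_1 z_2)^{1/3}$: sieve the primes below $z_0$ \emph{jointly} in the two coordinates (producing the factor $V(z_0,h^*)$), and sieve the primes in $[z_0,z_i)$ one coordinate at a time through a vector sieve built from the two beta sieves of Lemma~\ref{Lem: Beta sieve}. For the \textbf{preliminary joint sieve}, write $Q_i = \prod_{z_0\le p<z_i,\,p\in\mathcal P_i}p$, so that $P_i(z_i)=P_i(z_0)Q_i$. The condition $(m,P_1(z_0))=(n,P_2(z_0))=1$ is detected exactly by $\sum_{e_1\mid(m,P_1(z_0))}\mu(e_1)\sum_{e_2\mid(n,P_2(z_0))}\mu(e_2)$; grouping by $d=\operatorname{lcm}(e_1,e_2)$ and using $\mu(e_1)\mu(e_2)=\mu(d)\mu((e_1,e_2))$ for squarefree $e_1,e_2$, one obtains after \ref{axiom 1} a main term $X\sum_d\mu(d)h^*(d)$ with $h^*$ exactly as in \eqref{Def: hstar} (so $h^*(p)=h(p,1)1_{\mathcal P_1}+h(1,p)1_{\mathcal P_2}-h(p,p)1_{\mathcal P_1\cap\mathcal P_2}<1$ by \ref{axiom 2}, and by \ref{axiom 3} together with \eqref{condition_h(d)} the function $h^*$ has dimension $\kappa_1+\kappa_2$ on $[2,z_0]$). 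To control the error divisors I would not run this Möbius sum to the end but truncate it to a beta sieve in the fundamental-lemma range, of dimension $\kappa_1+\kappa_2$ and level $z_0^{s_0}$ with $s_0\to\infty$ and $z_0^{2s_0}\le D^{\varepsilon}$; since $\log z_0\asymp(\log D)^{1/3}$ one may take $s_0\asymp(\log D)^{2/3}$, and Lemma~\ref{Lem: Beta sieve} then produces upper and lower sieves $\rho^{\pm}$ with $\sum_{d_1,d_2}\rho^{\pm}(d_1,d_2)h(d_1,d_2)=V(z_0,h^*)\bigl(1+O((\log D)^{-1/6})\bigr)$.

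For the \textbf{vector sieve above $z_0$}, let $\Lambda_i^{\pm}(D_i)$ be beta sieve weights for $\mathcal P_i\cap[z_0,z_i)$ of dimensions $\kappa_i$ and levels $D_i$ with $D_1D_2= D$. Writing $a=1_{(m,P_1(z_0))=(n,P_2(z_0))=1}$, $b_1=1_{(m,Q_1)=1}$, $b_2=1_{(n,Q_2)=1}$, and $a^{\pm}=\rho^{\pm}*1$, $b_i^{\pm}=\lambda_i^{\pm}*1$ (all nonnegative on the $+$ side), I would use the elementary pointwise vector-sieve inequalities $ab_1b_2\le a^{+}b_1^{+}b_2^{+}$ and $ab_1b_2\ge a^{-}b_1^{+}b_2^{+}+a^{+}b_1^{-}b_2^{+}+a^{+}b_1^{+}b_2^{-}-2a^{+}b_1^{+}b_2^{+}$, valid since $a,b_1,b_2\in\{0,1\}$. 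Summing over $(m,n)\in\mathcal W$ and invoking \ref{axiom 1}, every product factors because $h$ is multiplicative on $\mathbb N^2$ and the small-prime divisors are coprime to the large-prime divisors: $\#\mathcal W_{(d_1^sd_1^l,\,d_2^sd_2^l)}=h(d_1^s,d_2^s)\,h(d_1^l,d_2^l)\,X+r(\cdot)$. Thus each main term factors as $X\cdot\bigl(\sum\rho^{\pm}h\bigr)\cdot\bigl(\sum_{d_1^l,d_2^l}\lambda_1^{\pm}(d_1^l)\lambda_2^{\pm}(d_2^l)h(d_1^l,d_2^l)\bigr)$; in the last sum I would split by $g=(d_1^l,d_2^l)$ and strip the residual coprimality by one more Möbius inversion, so that the $g=1$ part is $V_1^{\pm}(D_1,z_1)V_2^{\pm}(D_2,z_2)$ while the rest is $\ll(\log D)^{O(1)}\bigl(\prod_{p\ge z_0}(1+C_1/p^2)-1\bigr)\ll(\log D)^{O(1)}/z_0$ by \eqref{condition_h(d)}, negligible against the main term of size $\asymp(\log D)^{-2(\kappa_1+\kappa_2)/3}$.

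Next, insert the bounds $V_i^{+}(D_i,z_i)\le(F_i(t_i)+O((\log D)^{-1/6}))V_i(z_i,z_0)$ and $V_i^{-}(D_i,z_i)\ge(f_i(t_i)+O((\log D)^{-1/6}))V_i(z_i,z_0)$ of Lemma~\ref{Lem: Beta sieve}, with $t_i=\log D_i/\log z_i$, together with $\sum\rho^{\pm}h=V(z_0,h^*)(1+O((\log D)^{-1/6}))$ from the preliminary step. The upper bound then carries $F_1(t_1)F_2(t_2)$; for the lower bound, the routine rearrangement of $V_1^{-}V_2^{+}+V_1^{+}V_2^{-}-V_1^{+}V_2^{+}$ — replacing each quantity by its correct one-sided beta bound, using $f_i\ge 0$ for $t_i\ge\beta_i$ — carries $f_1(t_1)F_2(t_2)+F_1(t_1)f_2(t_2)-F_1(t_1)F_2(t_2)$; both cases come with the prefactor $X\,V(z_0,h^*)V_1(z_1,z_0)V_2(z_2,z_0)\bigl(1+O((\log D)^{-1/6})\bigr)$. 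The only remaining freedom is the split $\log D_1+\log D_2=\log D$, equivalently $t_1/\sigma_1+t_2/\sigma_2=1$; optimizing it (minimize $F_1F_2$ over $t_i\ge\beta_i-1$, resp. maximize the lower-bound combination over $t_i\ge\beta_i$) produces $F(\sigma_1,\sigma_2)$ and $f(\sigma_1,\sigma_2)$ as in \eqref{Eq: Function F sigma1 sigma2}--\eqref{Eq: Function f sigma1 sigma2}. For the error terms, all sieve weights are bounded by $1$ in absolute value, the product of the divisors appearing in any term is $\le z_0^{2s_0}D_1D_2\le D^{1+\varepsilon}$, and a given pair $(d_1,d_2)$ with $d_i\mid P_i(z_i)$ is reached $\ll\tau(d_1d_2)^4$ times through the convolutions, giving $R^{\pm}(z_0,z_1,z_2)\ll_{\varepsilon}\sum_{d_1d_2\le D^{1+\varepsilon},\,d_i\mid P_i(z_i)}\tau(d_1d_2)^4|r(d_1,d_2)|$.

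The main obstacle is the preliminary step: $h$ fails to be multiplicatively separable across the two coordinates exactly on the primes of $\mathcal P_1\cap\mathcal P_2$, so a naïve product of two one-dimensional sieves would replace the correct density $V(z_0,h^*)V_1(z_1,z_0)V_2(z_2,z_0)$ by the wrong $V_1(z_1)V_2(z_2)$. The joint sieve repairs this below $z_0$ at the cost of the auxiliary density \eqref{Def: hstar}, and one must check that $z_0=e^{(\log z_1z_2)^{1/3}}$ lies in the narrow window where simultaneously (i) the preliminary sieve truncates at level $\le D^{\varepsilon}$ while $s_0\to\infty$, so its error is negligible; (ii) $h^*$ still obeys \eqref{Condition: kappa} on $[2,z_0]$ with dimension $\kappa_1+\kappa_2$; and (iii) the off-diagonal contribution above $z_0$, of relative size $\ll(\log D)^{O(1)}/z_0$, is dominated by the $(\log D)^{-1/6}$ sieve error that is in any case unavoidable. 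Verifying this compatibility and bookkeeping the combined error terms through the four convolution layers is where essentially all of the genuine work lies; the remainder is the standard vector-sieve algebra, in the spirit of \cite[Section 3.3]{HBL2016}.
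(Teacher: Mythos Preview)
Your proposal is correct and follows the same overall architecture as the paper: split the sifting range at $z_0=e^{(\log z_1z_2)^{1/3}}$, handle the primes below $z_0$ with a fundamental-lemma sieve producing the density $V(z_0,h^*)$, handle the primes in $[z_0,z_i)$ coordinatewise via beta sieves, show the large-prime main term factors as $V_1^{\pm}V_2^{\pm}$ up to an error $\ll(\log D)^{O(1)}/z_0$ coming from the common prime divisors, and then optimise $D_1D_2=D$.

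The one tactical difference worth noting is this: you package the small-prime sieve $\rho^{\pm}$ and the two large-prime sieves $\lambda_i^{\pm}$ into a \emph{three}-fold pointwise vector-sieve inequality
\[
ab_1b_2\ \ge\ a^{-}b_1^{+}b_2^{+}+a^{+}b_1^{-}b_2^{+}+a^{+}b_1^{+}b_2^{-}-2a^{+}b_1^{+}b_2^{+},
\]
and then use $a^{\pm}=V(z_0,h^*)(1+O(e^{-s_0}))$ to recover the expected combination $V_1^{-}V_2^{+}+V_1^{+}V_2^{-}-V_1^{+}V_2^{+}$. The paper instead applies only the \emph{two}-fold vector sieve to the large primes, which produces sums of $\lambda_1^{\pm}(d_1)\lambda_2^{\pm}(d_2)\,\#\mathcal W^{*}_{(d_1,d_2)}$, and then evaluates $\#\mathcal W^{*}_{(d_1,d_2)}=S(\mathcal U(\mathbf d),z_0)$ \emph{asymptotically} by the fundamental lemma (not merely as an upper/lower bound). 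This sidesteps the three-fold inequality entirely: since the small-prime factor comes out as an asymptotic equality, there is no $-2a^{+}$ coefficient to cancel and no need to track the sign of $a^{-}-2a^{+}$. Both routes are valid; the paper's is a little cleaner in the bookkeeping, while yours is a direct instance of the general $k$-fold vector-sieve mechanism.
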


	\begin{proof}
		Our proof will resemble Heath-Brown and Li \cite[Section 3.3]{HBL2016}. So, we will only give the key changes in the preliminary sieving procedure. We will apply the beta sieves to establish the result. Before that, we perform pre-sieving to handle the small prime factors less than $z_0$. We proceed by writing \[
		P(z_ 0) :=\prod_{p<z_0} p .
		\]
		Throughout the proof, we let $\mathbf{d}=(d_1,\: d_2)$. We begin with the pre-sieving step.
		
		Suppose that it is given that $(d_1d_2,\: P(z_0))=1$. For \( (m, n)\in  \mathcal{W}_{\mathbf{d}} \), let \( \mathcal{U}(\mathbf{d}) \) denote the set of products \( m'n' \), where \( m' \) and \( n' \) are the largest divisors of \( m \) and \( n \) such that all prime factors of \( m' \) and \( n' \) belong to \( \mathcal{P}_1 \) and \( \mathcal{P}_2 \), respectively. The elements of \( \mathcal{W}_{\mathbf{d}} \) are counted according to their multiplicity.

		First, we wish to estimate the sum
		\begin{align*}
			S(\mathcal{U}(\mathbf{d}), z_0):=\sum_{\substack{k\in \mathcal{U}(\mathbf{d})\\ (k,\: P(z_0)=1}}1.
		\end{align*}
		We will use the Fundamental Lemma of the sieve methods to estimate the above sum. To apply it, first, we verify the sieve axioms.
		
		Note that if \( d \mid	P(z_ 0  ) \), by axiom \ref{axiom 1}, we have
		\[
		\#\mathcal{U}(\mathbf{d}) = h^*(d) h(\mathbf{d}) X + r(d),
		\]
		where
		\[
		h^*(d) = \sum_{\substack{d = e_1 e_2 e_3\\ e_1e_3| P_1(z_0)\\e_2e_3| P_2(z_0)}} h(e_1 e_3, e_2 e_3) \mu(e_3)
		\]
		and
		\[
		r(d) = \sum_{\substack{d = e_1 e_2 e_3\\ e_1e_3| P_1(z_0)\\e_2e_3| P_2(z_0)}} r(d_1 e_1 e_3, d_2 e_2 e_3) \mu(e_3).
		\]
		In particular, by axiom \ref{axiom 2}, we have
		\begin{equation}\label{definition_h*}
			h^*(p) =
			\begin{cases} 
				h(p, 1) + h(1, p) - h(p, p) \leq \frac{2C_1}{p}, & \text{if } p \in \mathcal{P}_1 \cap \mathcal{P}_2, \\[6pt]
				h(p, 1) \leq \frac{C_1}{p}, & \text{if } p \in \mathcal{P}_1 \setminus \mathcal{P}_2, \\[6pt]
				h(1, p) \leq \frac{C_1}{p}, & \text{if } p \in \mathcal{P}_2 \setminus \mathcal{P}_1, \\[6pt]
				0, & \text{otherwise}.
			\end{cases}
		\end{equation}
		
		We can now apply the Fundamental Lemma of the sieve of level $x^s$ \cite[Theorem 6.9]{FI2010}  to \( \mathcal{U}(\mathbf{d}) \), with $s$ to be chosen later to optimize the error terms to obtain
		\begin{align}\label{fundamental}
			S(\mathcal{U}(\mathbf{d}), z_0) &= h(\mathbf{d}) X V(z_0, h^*) 
			+ O_{C_1}(h(\mathbf{d}) X e^{-s}) \nonumber \\
			&\quad + O\Bigg(\sum_{\substack{e_1 e_2 e_3 < z_0^s \\ e_1 e_2 e_3 \mid P(z_0) \\ e_1 e_3 \mid P_1(z_0), \, e_2 e_3 \mid P_2(z_0)}} 
			|r(d_1 e_1 e_3, d_2 e_2 e_3)|\Bigg).
		\end{align}
		
		Next, we define
		\[
		P_i(z_0, z) := \prod_{\substack{z_0 \leq p < z \\ p \in \mathcal{P}_i}} p \quad \text{for $i\in \{1, 2\}$}.
		\]
		Let 
		\[
		\mathcal{W}^{*}: = \{(m, n) \in \mathcal{W} : (m, P_1(z_0)) = (n, P_2(z_0)) = 1\}.
		\]
		
		Now, we are in a position to introduce the beta sieves. For \( i\in \{1, 2\} \), let \( (\lambda_i^\pm(d))_{d\leq D_i}\) be upper and lower bound beta sieve weights of level $D_i = z_i^{s_i}$, where \( \beta_i - 1 \leq s_i \ll 1 \).  

        
        We begin with the upper bound. By the definition of upper bound sieve, we have
		\begin{align*}
			S(\mathcal{W}; \mathbf{z}) &= \sum_{(m, n) \in \mathcal{W}^{*}} 1_{(m, P_1(z_0, z_1))=1} 1_{(n, P_2(z_0, z_2))=1}\\ 
			&\leq \sum_{(m, n) \in \mathcal{W}^{*}} \bigg( \sum_{d_1 \mid (m, P_1(z_0, z_1))} \lambda_1^+(d_1) \bigg)
			\bigg( \sum_{d_2 \mid (n, P_2(z_0, z_2))} \lambda_2^+(d_2) \bigg) \\ 
			&= \sum_{d_1 \mid P_1(z_0, z_1)} \sum_{d_2 \mid P_2(z_0, z_2)} \lambda_1^+(d_1) \lambda_2^+(d_2) \#\mathcal{W}^{*}_{\mathbf{d}}.
		\end{align*}
		Note that
		\[
		\#\mathcal{W}^{*}_{\mathbf{d}} = S(\mathcal{U}(\mathbf{d}), z_0).
		\]
		Therefore, we may apply the estimate from \eqref{fundamental} to obtain
		\[
		S(\mathcal{W}; \mathbf{z}) \leq X V(z_0, h^*) \Sigma + O(E_1) + O(E_2),
		\]
		where
		\[
		\Sigma = \sum_{d_1 \mid P_1(z_0, z_1)} \sum_{d_2 \mid P_2(z_0, z_2)} \lambda_1^+(d_1) \lambda_2^+(d_2) h(\mathbf{d}),
		\]
		and the error terms are given by 
		\[
		E_1 = X e^{-s} \sum_{\substack{d_1 < D_1 \\ d_1 \mid P_1(z_1)}} \sum_{\substack{d_2 < D_2 \\ d_2 \mid P_2(z_2)}} h(\mathbf{d}),
		\]
		and
		\[
		E_2 = \sum_{\substack{f_1 \leq D_1 z_0^s \\ f_1 \mid P_1(z_1)}} \sum_{\substack{f_2 \leq D_2 z_0^s \\ f_2 \mid P_2(z_2)}} \tau^2(f_1) \tau^2(f_2) |r(f_1, f_2)|.
		\]
		One can apply axiom \ref{axiom 3} and proceed to estimate the sum \( \Sigma \) and the error terms \( E_1 \) and \( E_2 \) as in \cite[Section 3.3]{HBL2016} by  choosing \[
		s = (\log z_1  z_2)^{1/3}.\]

		For the lower bound, we cannot directly multiply the lower bound sieve weights for $1_{(m,\: P_1(z_0, z_1))=1}$ and $1_{(n,\: P_2(z_0, z_2))=1}$ since for some $m$ and $n$, both lower bounds might be negative. To circumvent this issue, we employ the usual idea in the vector sieve; for example, see \cite[Lemma 10.1]{Har2007}. For brevity, we write
        \begin{align*}
            \delta_1^\pm(m) :=\sum_{d_1|(m,\: P_1(z_0, z_1)) }\lambda_1^\pm(d_1) \quad \text{and} \quad  \delta_2^\pm(n) :=\sum_{d_2|(n,\: P_2(z_0, z_2)) }\lambda_2^\pm(d_2).
        \end{align*}
        Then, by definition of sieves, we have
        \begin{align*}
          \delta_1^-(m)\leq   1_{(m,\: P_1(z_0, z_1))=1}\leq \delta_1^+(m)
        \quad \text{and} \quad
          \delta_2^-(n)\leq   1_{(n,\: P_2(z_0, z_2))=1}\leq \delta_2^+(n).
        \end{align*}
        Applying the above inequalities, we have
        \begin{align*}
            1_{(m,\: P_1(z_0, z_1))=1} 1_{(n,\: P_2(z_0, z_2))=1} &=1_{(m,\: P_1(z_0, z_1))=1}\delta_2^+(n)-1_{(m,\: P_1(z_0, z_1))=1} \big(\delta_2^+(n)-1_{(n,\: P_2(z_0, z_2))=1}\big)\\
            &\geq \delta_1^-(m)\delta_2^+(n)-\delta_1^+(m)\big(\delta_2^+(n)-1_{(n,\: P_2(z_0, z_2))=1}\big)\\
            &\geq \delta_1^-(m)\delta_2^+(n)+\delta_1^+(m)\delta_2^-(n)-\delta_1^+(m)\delta_2^+(n).
        \end{align*}
        Therefore, we have
		\begin{align*}
			S(\mathcal{W}; \mathbf{z}) &= \sum_{(m, n) \in \mathcal{W}^{*}} 1_{(m,\: P_1(z_0, z_1))=1} 1_{(n,\:  P_2(z_0, z_2))=1} \\ 
			&\geq  \sum_{d_1 \mid P_1(z_0, z_1)} \sum_{d_2 \mid P_2(z_0, z_2)} \lambda_1^-(d_1) \lambda_2^+(d_2) \#\mathcal{W}^{*}_{\mathbf{d}}  + \sum_{d_1 \mid P_1(z_0, z_1)} \sum_{d_2 \mid P_2(z_0, z_2)} \lambda_1^+(d_1) \lambda_2^-(d_2) \#\mathcal{W}^{*}_{\mathbf{d}} \\ 
			&\quad - \sum_{d_1 \mid P_1(z_0, z_1)} \sum_{d_2 \mid P_2(z_0, z_2)} \lambda_1^+(d_1) \lambda_2^+(d_2) \#\mathcal{W}^{*}_{\mathbf{d}}.
		\end{align*}
		One can now proceed similarly to \cite[Section 3.3]{HBL2016} to get the desired lower bound. This completes the proof of the proposition.
	\end{proof}

	\section{Sieve estimates}\label{sec: sieve estimates}

	Recall from \eqref{Eq: App of Buchstab} and \eqref{Eq: Main estimate} that if  \( \theta_1, \theta_2, \theta, \lambda \) are such that
	\[
	\lambda>0, \quad 0 < \theta_2 < \theta_1 < \frac{1}{2}, \quad \theta_2 < \theta < 1, \quad \text{and} \quad \theta_1\asymp \theta_2,
	\] 
	then we wish to show that there exists a positive constant $c$ such that
	\begin{align}\notag 
		S(\mathcal{A}, \mathcal{Q}, x^{\theta_1}) - \sum_{\substack{x^{\theta_1}<p_1\leq x^{1/2}\\ p_1\equiv 3\Mod 4}}S(\mathcal{A}_{p_1}, \mathcal{Q}, p_1)-\lambda \sum_{x^{\theta_2} \leq p \leq x^{\theta}} w_p \#\mathcal{B}_p \geq \left(c + o(1)\right) \frac{x}{(\log x)^{5/2}},
	\end{align}
	where $S(\mathcal{A}, \mathcal{Q}, z)$ is given by \eqref{Def: sifting semi}, $\mathcal{B}_p$ is given by \eqref{Def: set B}, and 
	\[w_p=1-\dfrac{\log p}{\theta \log x}.\]
	For the convenience of the readers, we also recall that
	\begin{align*}
		\mathcal{P}=\{ p\: \text{prime}\} \quad \text{and} \quad \mathcal{Q}=\{p\equiv 3\Mod 4\},
	\end{align*}
	and for any $z\geq 3$,
	\begin{align*}
		P(z)=\prod_{\substack{p<z\\ p\in \mathcal{P}}}p \quad \text{and} \quad Q(z)=\prod_{\substack{p<z\\ p\in \mathcal{Q}}}p.
	\end{align*}

	First, we apply the lower bound vector sieve estimate \eqref{lowerbound} from Proposition \ref{vector_sieve} to obtain the following result.
	\begin{prop}\label{Prop: lower bound}
		If $0<\theta_2<\theta_1<1/2$, we have
		\begin{align*}
			S(\mathcal{A}, \mathcal{Q}, x^{\theta_1}) \geq  \left(\frac{Ce^{-3\gamma/2}}{4} + o(1)\right) 
			\frac{f\left((2\theta_1)^{-1}, (2\theta_2)^{-1}\right)}{\theta_1^{1/2} \theta_2} 
			\frac{x}{(\log x)^{5/2}},
		\end{align*}
		where the function $f$ is given as in Proposition \ref{vector_sieve}, $\gamma$ is the Euler-Mascheroni constant, and
		\[ C = \frac{9}{4} \prod_{\substack{ p > 3\\p \equiv 3 \pmod{4}}} \left(1 - \frac{3p - 1}{(p-1)^3}\right) 
		\prod_{p \equiv 1 \pmod{4}} \left(1 - \frac{1}{(p-2)^2}\right).
		\]
	\end{prop}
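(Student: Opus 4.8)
The plan is to apply the lower bound vector sieve estimate \eqref{lowerbound} from Proposition \ref{vector_sieve} with the right choice of the data $(\mathcal{W}, \mathcal{P}_1, \mathcal{P}_2, z_1, z_2, D)$, and then to verify the three axioms \ref{axiom 1}--\ref{axiom 3} and compute the main term explicitly. First I would take $\mathcal{W}$ to be the set of pairs $(m, n)$ encoding the condition "$p \le x-2$, $p \equiv 3 \Mod 8$, $p - 1 = m$, $p + 2 = n$" — more precisely, parametrizing $p - 1 \in \mathcal{A}$ so that the first coordinate runs over $p-1$ and the second over $p+2$. The two sifting sets of primes are $\mathcal{P}_1 = \mathcal{Q} = \{p \equiv 3 \Mod 4\}$ (for the sum-of-two-squares condition on $p-1$, which requires no prime factor $\equiv 3 \Mod 4$ below $x^{1/2}$, cut off at $z_1 = x^{\theta_1}$) and $\mathcal{P}_2 = \mathcal{P}$, the set of all primes (for the condition $(p+2, P(x^{\theta_2})) = 1$, i.e. $z_2 = x^{\theta_2}$). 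Then $S(\mathcal{W}; \mathbf{z})$ with $\mathbf{z} = (x^{\theta_1}, x^{\theta_2})$ is precisely $S(\mathcal{A}, \mathcal{Q}, x^{\theta_1})$ up to the bookkeeping of $\mathcal{A}$'s definition.

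\textbf{Verifying the axioms and identifying $X$ and $h$.} The core input is a Bombieri--Vinogradov type equidistribution statement for primes $p$ with $p - 1 \equiv 0 \Mod{d_1}$ and $p + 2 \equiv 0 \Mod{d_2}$, which follows from Theorem \ref{theorem_pan} (or Lemma \ref{Lemma_HBL_2}); this gives axiom \ref{axiom 1} with $X$ proportional to $\mathrm{li}(x)$ or $x / \log x$ and the multiplicative density $h(d_1, d_2)$ governed by the number of residue classes $\Mod{d_1 d_2}$ that a prime $p$ can occupy subject to $d_1 \mid p - 1$, $d_2 \mid p + 2$, $p \equiv 3 \Mod 8$. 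For $p \in \mathcal{P}_1 = \mathcal{Q}$ (so $p \equiv 3 \Mod 4$, hence $p - 1 \equiv 2 \Mod 4$ can only be divisible by the first power of such a prime) one gets $h_1(p) = h(p, 1) = 1/(p-1)$; for $p \in \mathcal{P}_2$, $h_2(p) = h(1, p) = 1/(p-1)$ for $p > 3$ with a correction when $p \mid $ (the fixed modulus $8$) or $p = 3$ (where the congruence $p+2 \equiv 0$ is incompatible or forced); and $h(p,p)$ for $p \equiv 3 \Mod 4$ is $0$ when $p \mid \gcd$ considerations force it, or $\asymp 1/p^2$ otherwise — in any case axiom \ref{axiom 2} holds with a suitable $C_1$. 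Axiom \ref{axiom 3} holds with $\kappa_1 = 1/2$ (the semi-linear sieve: $h_1$ is supported on primes $\equiv 3 \Mod 4$, which have density $1/2$) and $\kappa_2 = 1$ (the linear sieve: $h_2$ is supported on all primes). One then reads off from \eqref{lowerbound} that
\[
S(\mathcal{A}, \mathcal{Q}, x^{\theta_1}) \ge X \, V(z_0, h^*) \, V_1(z_1, z_0) \, V_2(z_2, z_0) \, f(\sigma_1, \sigma_2)(1 + o(1)) + (\text{error}),
\]
where $D$ is chosen so that $\sigma_1 = \log D / \log z_1 = 1/(2\theta_1)$ and $\sigma_2 = \log D / \log z_2 = 1/(2\theta_2)$ (this is the natural level of distribution: $D = x^{1/2}$ up to the $(\log x)^{-B}$ loss, which is absorbed in the $o(1)$), matching the arguments appearing in the statement.

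\textbf{Computing the main term.} By Mertens' theorem, $V_1(z_1, z_0) = \prod_{z_0 \le p < x^{\theta_1},\, p \equiv 3 (4)} (1 - \tfrac{1}{p-1})$, and using that primes $\equiv 3 \Mod 4$ have relative density $1/2$ together with a standard Euler-product manipulation, $V_1(z_1, z_0) \sim c_1 (\log x)^{-1/2}$ with an explicit constant picking up an $e^{-\gamma/2}$ and the factor $\theta_1^{-1/2}$ after comparison with $(\log z_1)^{-1/2}$; similarly $V_2(z_2, z_0) \sim c_2 (\log x)^{-1}$ with an $e^{-\gamma}$ and a factor $\theta_2^{-1}$. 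The singular-series constant $C$ — with its local factors $\bigl(1 - \tfrac{3p-1}{(p-1)^3}\bigr)$ at primes $\equiv 3 \Mod 4$ and $\bigl(1 - \tfrac{1}{(p-2)^2}\bigr)$ at primes $\equiv 1 \Mod 4$ — emerges from $V(z_0, h^*)$ times the residual Euler factors of $V_1, V_2$ not captured by the density heuristic; the prefactor $9/4$ and the small-prime adjustments (the roles of $p = 2, 3$ and the modulus $8$) account for the $\tfrac14$ in $\tfrac{Ce^{-3\gamma/2}}{4}$. Combining $X \asymp x/\log x$ with the two sieve savings $(\log x)^{-1/2}$ and $(\log x)^{-1}$ gives the overall $x (\log x)^{-5/2}$, and $f((2\theta_1)^{-1}, (2\theta_2)^{-1})$ carries through verbatim from \eqref{lowerbound}. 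Finally the error term $R^-(z_0, z_1, z_2) \ll \sum \tau(d_1d_2)^4 |r(d_1, d_2)|$ is controlled, for $d_1 d_2 \le D^{1+\varepsilon} \le x^{1/2 + \varepsilon}$, by Lemma \ref{Lemma_HBL_2} with $k = 4$ and $A$ large, which makes it $o(x(\log x)^{-5/2})$.

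\textbf{Main obstacle.} The routine-but-delicate part is the bookkeeping of the small primes — reconciling the local densities at $p = 2, 3$ and the fixed modulus $8$ with the clean Euler products, so that the constant comes out exactly as $\frac{Ce^{-3\gamma/2}}{4} \cdot \frac{1}{\theta_1^{1/2}\theta_2}$ rather than off by a rational factor or a power of $e^{\gamma}$; this is where one must be careful about which primes are included in $z_0$-pre-sieving versus the main sieve ranges, and about the compatibility of the congruence $p \equiv 3 \Mod 8$ with $d_1 \mid p-1$ and $d_2 \mid p+2$. The genuinely substantive input, by contrast, is packaged: the equidistribution is Lemma \ref{Lemma_HBL_2}, and the combinatorial sieve bound with the mixed dimensions $\kappa_1 = 1/2$, $\kappa_2 = 1$ is Proposition \ref{vector_sieve}, so no new analytic difficulty arises here beyond verifying the hypotheses.
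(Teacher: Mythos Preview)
Your proposal is correct and follows essentially the same route as the paper: you apply Proposition~\ref{vector_sieve} to $\mathcal{W}=\{(p-1,p+2):\,p\le x-2,\ p\equiv 3\Mod 8\}$ with $\mathcal{P}_1=\mathcal{Q}$, $\mathcal{P}_2=\mathcal{P}$, $z_i=x^{\theta_i}$, $D=x^{1/2-\varepsilon}$, control the remainder via Lemma~\ref{Lemma_HBL_2}, and then reduce the main term to an explicit Euler product times the Mertens asymptotics $V_1(z_1)\sim e^{-\gamma/2}(\theta_1\log x)^{-1/2}$ and $V_2(z_2)\sim e^{-\gamma}(\theta_2\log x)^{-1}$. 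The only cosmetic point is that the paper records the comparison as $S(\mathcal{A},\mathcal{Q},x^{\theta_1})\ge S(\mathcal{W};\mathbf{z})$ rather than equality, and it writes down the density $h(d_1,d_2)$ explicitly (with the special value at $(d_1,d_2)=3$) before carrying out the prime-by-prime Euler factor computation that produces $C$ and the $\tfrac14$; your sketch correctly flags exactly this bookkeeping as the delicate step.
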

	
	\begin{proof}
		We will apply the lower-bound vector sieve to the set 
		\begin{align}\label{Def: W}
			\mathcal{W}=\big\{(p-1, p+2)\colon 3\leq p\leq x-2, p\equiv 3\Mod 8\big\}
		\end{align}	
		by choosing $\mathcal{P}_1=\mathcal{Q}$, $\mathcal{P}_2=\mathcal{P}$, $\mathbf{z}=(x^{\theta_1}, x^{\theta_2})$ in Proposition \ref{vector_sieve}.
		First, we note that
		\begin{align}\label{Eq: lower i}
			S(\mathcal{A}, \mathcal{Q}, x^{\theta_1})\geq S\big(\mathcal{W}; (x^{\theta_1}, \: x^{\theta_2})\big).
		\end{align}
		For brevity, we set 
		\[z_1=x^{\theta_1} \quad \text{and} \quad  z_2=x^{\theta_2}.\]
		
		First, we define \( h\colon \mathbb{N}^2\to (0,1] \) as follows
		\begin{align}\label{Eq: lower def h}
			h(d_1, d_2) =
			\begin{cases}
				\frac{1}{\varphi(d_1 d_2)}, & \text{if } (d_1, d_2) = (d_1, 2) = (d_2, 2) = 1, \\[6pt]
				\frac{2}{\varphi(d_1)\varphi(d_2)}, & \text{if } (d_1, d_2) = 3, \, (d_1, 2) = (d_2, 2) = 1, \\[6pt]
				0, & \text{otherwise}.
			\end{cases}
		\end{align}
		Then, if $\mathbf{d}=(d_1, d_2)$, we have
		\[
		\#\mathcal{W}_{\mathbf{d}} = h(\mathbf{d}) \frac{\pi(x)}{\varphi(8)} + r(\mathbf{d}),
		\]
		where \( r(\mathbf{d}) \) is an error term.
		Indeed, by Lemma \ref{Lemma_HBL_2}, for any $\varepsilon>0$ and $A\geq 3$, we have
		\[
		\sum_{\substack{\mathbf{d} \\ d_1 d_2 < x^{1/2-\varepsilon}}} \tau(d_1 d_2)^4 |R(\mathbf{d})| \ll 
		\frac{x}{(\log x)^A}.
		\]
		Therefore, we choose $D=x^{1/2-2\varepsilon}$ in Proposition \ref{vector_sieve} to obtain
		\begin{align}\label{Eq: lower ii}
			S\big(\mathcal{W}; (z_1, \: z_2)\big) \geq \dfrac{\pi(x)}{4}V(z_0, h^*)V_1(z_1, z_0)V_2(z_2, z_0)f((2\theta_1)^{-1}, (2\theta_2)^{-1})(1+o(1)) + \dfrac{x}{(\log x)^{10}},
		\end{align}
		where $z_0=e^{(\log z_1z_2)^{1/2}}$. Next, we wish to simplify the above expression. We begin by examining the asymptotics of the factor \( V(z_0, h^*) V_1(z_1, z_0) V_2(z_2, z_0) \). 
		Recall that by \eqref{Def: hstar}, we have 
		\[  h^*(d)=\sum_{\substack{e_1e_2e_3=d\\ e_1e_3|P_1(z_0)\\ e_2e_3|P_2(z_0)}}h(e_1e_3, e_2e_3)\mu(e_3).\]
		So, by \eqref{Eq: lower def h}, we have 
		\[
		h^*(p) =
		\begin{cases}
			h(p, 1) + h(1, p) - h(p, p) =
			\begin{cases} 
				\frac{2}{\varphi(p)}, & \text{if } p \neq 3, \\[8pt]
				\frac{1}{2}, & \text{if } p = 3,
			\end{cases} & \text{if } p \equiv 3 \Mod 4, \\[12pt]
			h(p, 1) = \frac{1}{\varphi(p)}, & \text{if } p \equiv 1 \Mod 4 \text{ or } p = 2.
		\end{cases}
		\]
		Moreover, by \eqref{definition_Vi} and \eqref{Eq: lower def h}, we have  
		\[
		V_1(z_1, z_0) = \prod_{\substack{z_0 \leq p < z_1 \\ p \equiv 3 \Mod 4}} \left(1 - h(p, 1)\right) = \prod_{\substack{z_0 \leq p < z_1 \\ p \equiv 3 \Mod 4}} \left(1 - \frac{1}{\varphi(p)}\right),
		\]
		\[
		V_2(z_2, z_0) = \prod_{z_0 \leq p < z_2} \left(1 - h(1, p)\right) = \prod_{z_0 \leq p < z_2} \left(1 - \frac{1}{\varphi(p)}\right).
		\]
		Therefore, the Euler factors in \( V(z_0, h^*) V_1(z_1, z_0) V_2(z_2, z_0) \) are given as follows
		\[
		\begin{cases}
			1 = 2\left(1 - \frac{1}{2}\right), & \text{if } p = 2, \\[8pt]
			\frac{1}{2} = \frac{9}{8} \left(1 - \frac{1}{3}\right)^2, & \text{if } p = 3, \\[8pt]
			1 - \frac{2}{p-1} = \left(1 - \frac{3p - 1}{(p-1)^3}\right)\left(1 - \frac{1}{p}\right)^2, & \text{if } 3 < p < z_0 \text{ and } p \equiv 3 \Mod 4, \\[8pt]
			1 - \frac{1}{p-1} = \left(1 - \frac{1}{(p-1)^2}\right)\left(1 - \frac{1}{p}\right), & \text{if } 3 \leq p < z_0 \text{ and } p \equiv 1\Mod 4, \\[8pt]
			\left(1 - \frac{1}{p-1}\right)^2 = \left(1 - \frac{1}{(p-1)^2}\right)^2\left(1 - \frac{1}{p}\right)^2, & \text{if } z_0 \leq p < z_2 \text{ and } p \equiv 3 \Mod 4, \\[8pt]
			1 - \frac{1}{p-1} = \left(1 - \frac{1}{(p-1)^2}\right)\left(1 - \frac{1}{p}\right), & \text{if } z_0 \leq p < z_2 \text{ and } p \equiv 1 \Mod 4, \\[8pt]
			1 - \frac{1}{p-1} = \left(1 - \frac{1}{(p-1)^2}\right)\left(1 - \frac{1}{p}\right), & \text{if } z_2 \leq p < z_1 \text{ and } p \equiv 3 \Mod 4.
		\end{cases}
		\]
		Thus, we have
		\begin{equation}\label{VV_1V_2_asymptotic}
			V(z_0, h^*) V_1(z_1, z_0) V_2(z_2, z_0) \sim C V_1(z_1) V_2(z_2),
		\end{equation}
		where
		\begin{align}\label{Def: C}
			C = \frac{9}{4} \prod_{\substack{ p > 3\\p \equiv 3 \Mod 4}} \left(1 - \frac{3p - 1}{(p-1)^3}\right) 
			\prod_{p \equiv 1 \Mod 4} \left(1 - \frac{1}{(p-1)^2}\right),
		\end{align}
		and
		\begin{align}
			V_1(z_1) &= \prod_{\substack{p < z_1 \\ p \equiv 3 \Mod 4}} \left(1 - \frac{1}{p}\right) \sim \left(\frac{e^{-\gamma}}{\log z_1}\right)^{1/2} = \frac{e^{-\gamma/2} }{\theta_1^{1/2} (\log x)^{1/2}}, \label{definition_V_1}\\
			V_2(z_2) &= \prod_{p < z_2} \left(1 - \frac{1}{p}\right) \sim \frac{e^{-\gamma}}{ \log z_2} = \frac{e^{-\gamma}} {\theta_2 \log x},\label{definition_V_2}
		\end{align}
		using the Mertens estimate and recalling that $z_1=x^{\theta_1}$ and $z_2=x^{\theta_2}$. Finally, we combine \eqref{Eq: lower i}, \eqref{Eq: lower ii}, \eqref{VV_1V_2_asymptotic}, \eqref{definition_V_1}, and \eqref{definition_V_2} to obtain
		\begin{align}\label{term_1}
			S(\mathcal{A}, \mathcal{Q}, x^{\theta_1}) &\geq \big(C + o(1)\big) \frac{\pi(x)}{4} V_1(z_1) V_2(z_2) 
			f\left((2\theta_1)^{-1}, (2\theta_2)^{-1}\right) \nonumber \\
			&\sim \left(\frac{Ce^{-3\gamma/2}}{4}+ o(1)\right) 
			\frac{f\left((2\theta_1)^{-1}, (2\theta_2)^{-1}\right)}{\theta_1^{1/2} \theta_2} 
			\frac{x}{(\log x)^{5/2}},
		\end{align}
		as desired.
	\end{proof}

	We now incorporate the sieve ingredients differently by using a ``switching principle'' due to Iwaniec \cite{Iwa1972} to give an upper bound in the following proposition.
	
	\begin{prop}\label{Prop: upper bound}
		Let $1/4<  \theta_1\leq 1/2$ and $0<\theta_2<\theta_1$. Then, for sufficiently large \( x \), we have
		\begin{align*}
			\sum_{\substack{x^{\theta_1} < p_1 \leq \sqrt{x} \\ p_1 \equiv 3 \Mod 4}} S(\mathcal{A}_{p_1}, \mathcal{Q}, p_1) 
			\leq \left(2 c_1 c_2^2 c_3 C(\theta_1) + o(1)\right) \frac{x}{\theta_2\min(\theta_1, 1/2-\theta_2)(\log x)^{5/2}},
		\end{align*}
		where
		\[
		c_1 := \prod_{p > 3} \left(1 + \frac{1}{(p-2)^2}\right), \quad
		c_2 := \prod_{p > 2} \left(1 - \frac{1}{(p-1)^2}\right), 
		\]	
		\[
		c_3 := \frac{2}{\sqrt{\pi}} 
		\prod_{p \equiv 1 \Mod 4}\left(1 - \frac{1}{p}\right)^{1/2} 
		\left(1 + \frac{p^2}{(p-1)^3}\right) 
		\prod_{p \equiv 3 \Mod 4} \left(1 - \frac{1}{p}\right)^{1/2},
		\]
		and
		\[
		C(\theta_1) := \int_0^{1-2\theta_1} \frac{1}{\beta^{1/2}(1-\beta)} 
		\log\left(\frac{1-\beta-\theta_1}{\theta_1}\right) d\beta.
		\]
	\end{prop}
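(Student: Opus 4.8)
The plan is to combine Iwaniec's switching principle with upper‑bound beta sieves, in the spirit of the treatment of the analogous Buchstab term in Chen's theorem and in \cite{Iwa1972, HBL2016}.

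\emph{Step 1 (switching and reorganisation).} An element counted by $S(\mathcal{A}_{p_1},\mathcal{Q},p_1)$ is $n=p-1$ with $p$ prime, $p\leq x-2$, $p\equiv 3\Mod 8$, $(p+2,P(x^{\theta_2}))=1$, $p_1\mid n$, and $n$ having no prime factor $\equiv 3\Mod 4$ below $p_1$. Writing $n=p_1\ell$ and using $p_1>x^{\theta_1}$, the last condition is implied by $(\ell,Q(x^{\theta_1}))=1$, so interchanging the order of summation gives
\[
\sum_{\substack{x^{\theta_1}<p_1\leq\sqrt x\\ p_1\equiv 3\Mod 4}} S(\mathcal{A}_{p_1},\mathcal{Q},p_1)\ \leq\ \sum_{\substack{\ell\\ (\ell,Q(x^{\theta_1}))=1}}\#\bigl\{x^{\theta_1}<p_1\leq\sqrt x\colon p_1\equiv 3\Mod 4,\ p_1\ell\leq x-3,\ p_1\ell\equiv 2\Mod 8,\ p_1\ell+1\ \text{prime},\ (p_1\ell+3,P(x^{\theta_2}))=1\bigr\}.
\]
Since $p_1>x^{\theta_1}$ and $p_1\ell\leq x$ we have $\ell<x^{1-\theta_1}<x^{3/4}$, so $\ell$ has at most two prime factors $\equiv 3\Mod 4$; this is exactly where the hypothesis $\theta_1>1/4$ is used, and it is what keeps the semilinear sieve on $\ell$ under control.

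\emph{Step 2 (the sieve, with Bombieri--Vinogradov supplied by the switch).} I would fix a dyadic block $p_1\sim P_1=x^{t}$ with $\theta_1\leq t\leq 1/2$ and bound the inner count by a product of upper‑bound beta sieves (Lemma \ref{Lem: Beta sieve}): a semilinear sieve ($\kappa=1/2$) for $(\ell,Q(x^{\theta_1}))=1$, a linear sieve ($\kappa=1$) for $(p_1\ell+3,P(x^{\theta_2}))=1$, and a further linear sieve for the primality of $p:=p_1\ell+1$, which is replaced by the weaker condition $(p_1\ell+1,P(w))=1$ with $w=x^{\min(\theta_1,\,1/2-\theta_2)}$. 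The crucial effect of the switch is that, once primality of $p$ has been replaced, the distinguished prime of the sequence is $p_1$ rather than $p$; hence for fixed sieve moduli the inner count is a difference $\pi(y;\ell,q,a)-\pi(y';\ell,q,a)$ in the notation of Theorem \ref{theorem_pan}, with coefficient $m=\ell\leq x^{1-\varepsilon}$, modulus $q$ built from the two linear sieves (so $q<x^{1/2-\varepsilon}$), and $y,y'$ chosen so as to restrict $p_1$ to $[P_1,2P_1]\cap(x^{\theta_1},\min(\sqrt x,(x-3)/\ell)]$. Summing the errors over $\ell$ against the $O(1)$ weight and over the $O(\log x)$ dyadic blocks, Theorem \ref{theorem_pan} (or Lemma \ref{Lemma_HBL_2}) gives a total contribution $\ll x/(\log x)^{10}$. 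The choice $w=x^{\min(\theta_1,1/2-\theta_2)}$ is forced on the one side by $w\leq p_1$ (so the local density of the $p_1\ell+1$–sieve is $h(q)=1/q$ at all $q<w$) and on the other by the requirement that the product of the two linear‑sieve levels stay below $x^{1/2-\varepsilon}$; this is precisely the $\min$ in the statement.

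\emph{Step 3 (the main term and the constant).} After the error estimate, the main term for $p_1\sim P_1=x^{t}$ has the shape $X_{P_1}\,V_{1/2}(x^{\theta_1})\,V^{(p+3)}(x^{\theta_2})\,V^{(p+1)}(w)\cdot(\text{sieve functions})$, where $X_{P_1}\asymp x/(t\log x)$ is the size of the sequence, and $V_{1/2}(x^{\theta_1})\asymp(\theta_1\log x)^{-1/2}$, $V^{(p+3)}(x^{\theta_2})\asymp(\theta_2\log x)^{-1}$, $V^{(p+1)}(w)\asymp(\min(\theta_1,1/2-\theta_2)\log x)^{-1}$ are the sieve densities; the Euler factors assemble into $c_1$, $c_2$ (twin‑prime–type corrections at odd primes, split by residue mod $4$) and $c_3$ (the Landau sum‑of‑two‑squares constant $\tfrac{2}{\sqrt\pi}\prod_p(1-1/p)^{1/2}$ together with its $p\equiv 1\Mod 4$ correction). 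Summing over the dyadic blocks, i.e. integrating over $t\in(\theta_1,1/2)$, and substituting $\beta=1-2t$ turns the $t$–integral into
\[
C(\theta_1)=\int_0^{1-2\theta_1}\frac{1}{\beta^{1/2}(1-\beta)}\log\Bigl(\frac{1-\beta-\theta_1}{\theta_1}\Bigr)\,d\beta,
\]
the $\beta^{-1/2}$ arising from the semilinear density at the scale of $\ell$, the $(1-\beta)^{-1}$ from $\pi(x^{1-\beta})\sim x^{1-\beta}/((1-\beta)\log x)$, and the logarithm from the linear‑sieve function $F$ at the admissible level; the numerical factor $2$ is $F(1)=2e^{\gamma}$ cancelled against a Mertens factor $e^{-\gamma}$. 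This yields the stated bound with constant $2c_1c_2^2c_3\,C(\theta_1)$ and denominator $\theta_2\min(\theta_1,1/2-\theta_2)(\log x)^{5/2}$, the exponent $5/2=\tfrac12+1+1$ recording the three sieving conditions.

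\emph{Main obstacle.} The hard part is not any single sieve estimate but the simultaneous bookkeeping in Step 2: one must verify that after the switch the error modulus stays $\leq x^{1/2}(\log x)^{-B}$ uniformly in $\ell$ and in the dyadic block (so that Theorem \ref{theorem_pan} applies with coefficient $m=\ell$), that the two linear sieves can be run at compatible levels whose product is $\leq x^{1/2-\varepsilon}$, and that the loss from relaxing $(\ell,Q(p_1))=1$ to $(\ell,Q(x^{\theta_1}))=1$ is exactly what is recovered in $C(\theta_1)$. A second, purely computational obstacle is the explicit evaluation of the singular series $c_1,c_2,c_3$ and of the sieve integral, which requires carefully tracking the local factors at $2$, at $3$, and at primes $\equiv 1$ versus $3\Mod 4$, together with the values of the linear and semilinear sieve functions at the chosen levels.
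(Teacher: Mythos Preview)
Your approach differs substantially from the paper's, and in its present form Step 3 does not recover the stated constants.

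The paper does not use a semi-linear sieve on $\ell$ at all. It exploits the hypothesis $\theta_1>1/4$ more fully than you do: since $p\equiv 3\pmod 8$ forces $(p-1)/2\equiv 1\pmod 4$, the number of prime factors $\equiv 3\pmod 4$ of $p-1$ (with multiplicity) is even; and since each such factor is $\geq p_1>x^{1/4}$ while $p-1<x$, there are \emph{exactly two}. Hence every element counted by $T$ has the rigid shape $p-1=2mp_1p_2$ with $x^{\theta_1}<p_1<p_2$, $p_1\equiv p_2\equiv 3\pmod 4$, and every prime factor of $m$ congruent to $1\pmod 4$. The ``switch'' is then to $p_2$, not to $p_1$: one drops the primality of $p$ in favour of two upper-bound \emph{linear} sieves, for ``$2mp_1p_2+1$ prime'' (level $D_1$) and for ``$(2mp_1p_2+3,P(x^{\theta_2}))=1$'' (level $D_2$), and invokes Theorem \ref{theorem_pan} with coefficient $2mp_1$ and free prime variable $p_2$.

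This is exactly why your Step 3 does not produce the constants in the statement. The factor $c_3$ is not a semi-linear sieve constant; it is the leading constant in the Wirsing-type asymptotic
\[
\sum_{m\leq t}\frac{b^*(m)\,m}{\varphi(m)^2}\ \sim\ c_3\,(\log t)^{1/2},
\]
where $b^*(m)=1$ exactly when all prime factors of $m$ are $\equiv 1\pmod 4$, and the weight $m/\varphi(m)^2$ comes from the local corrections $H_1(m)H_2(m)$ left over by the two linear sieves. Similarly, $C(\theta_1)$ is a genuine \emph{double} integral over $m=(x/2)^\beta$ and $p_1=(x/2)^\alpha$; the logarithm $\log\bigl((1-\beta-\theta_1)/\theta_1\bigr)$ is the value of the inner integral $\int_{\theta_1}^{(1-\beta)/2}\frac{d\alpha}{\alpha(1-\alpha-\beta)}$ (after partial fractions), not a sieve function. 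Your single substitution $\beta=1-2t$ cannot by itself manufacture this logarithm, and your attribution of it to ``the linear-sieve function $F$'' is wrong, since the upper-bound function $F_2(s)=2e^\gamma/s$ contains no logarithm. With the exact decomposition, the three-sieve bookkeeping you flag as the main obstacle disappears: there are only two (linear) sieves, and the half-dimensional contribution is an explicit arithmetic sum over $m$.
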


	\begin{proof}

		For brevity, let us write
		\[
		T := \sum_{\substack{x^{\theta_1}< p_1 \leq \sqrt{x} \\ p_1 \equiv 3 \pmod{4}}} S(\mathcal{A}_{p_1}, \mathcal{Q}, p_1).
		\]
		Since \( \theta_1>1/4 \), it follows that any number counted in the sum \( T \) must have exactly one prime factor congruent to \(3 \pmod{4} \) in addition to \( p_1 \), and exactly one factor of \( 2 \). This is because the number of prime factors \( \equiv 3 \pmod{4} \) in \( p-1 \) is even, and there cannot be four of them. Hence, the numbers counted in \( T \) take the form
		\begin{equation}\label{T_term}
			p - 1 = 2mp_1p_2 \quad \text{and} \quad  (p+2, P(z_2)) = 1,
		\end{equation}
		where
		\[
		p' \mid m \implies p' \equiv 1 \pmod{4}, \quad z_1 < p_1 <p_2 \leq \sqrt{x},  \quad p_1 \equiv p_2 \equiv 3 \Mod 4.
		\]
		These conditions imply that
		\[
		m < x/(2z_1^2) \quad \text{and} \quad z_1 < p_1 < (x/2m)^{1/2},
		\]
		where we will denote $z_1=x^{\theta_1}$ throughout the proof.
		
		Before proceeding, we introduce a notation. Given any positive integer $n$, we write $b^*(n)=1$ to denote that all the prime factors of $n$ are congruent to $1\Mod 4$, and $b^*(n)=0$ otherwise.
		
		Let \( (\lambda_1^+(d))_{d\leq D_1}\) and \( (\lambda_2^+(d))_{d\leq D_2} \) be two linear upper bound sieves and of levels \( D_1 \leq  z_1\) and \( D_2\leq z_2 \), respectively, to be chosen later. Then, we have
		\begin{align*}
			1_{2mp_1p_2+1\: \text{prime}}\leq \sum_{d_1|2mp_1p_2+1}\lambda_1^+(d_1) \quad \text{and}\quad 1_{(2mp_1p_2+3, \: P(z_2))=1}\leq \sum_{d_2|2mp_1p_2+3}\lambda_2^+(d_2).
		\end{align*}
		This implies that
		\begin{align}
			T & \leq (1+o(1))\sum_{m \leq x/(2z_1^2)} b^*(m) 
			\sum_{\substack{z_1 < p_1 \leq \sqrt{\frac{x}{2m}} \\ p_1 \equiv 3\Mod 4}}
			\sum_{\substack{3<p_2 \leq \frac{x}{2mp_1} \\ p_2 \equiv 3 \Mod 4}}
			\sum_{d_1 \mid (2mp_1p_2 + 1)} \lambda_1^+(d_1) 
			\sum_{d_2\mid (2mp_1p_2 + 3)} \lambda_2^+(d_2) \nonumber \\
			&=(1+o(1)) \sum_{m \leq x/(2z_1^2)} b^*(m) \sum_{\substack{d_1 \leq D_1\\ d_2 \leq D_2\\ (d_1, 2m) = (d_2, 6m) = 1\\(d_1, d_2) = 1}} \lambda_1^+(d_1) \lambda_2^+(d_2)
			\sum_{\substack{z_1< p_1 \leq \sqrt{\frac{x}{2m}} \\ p_1 \equiv 3 \Mod 4}}
			\sum_{\substack{3<p_2 \leq \frac{x}{2mp_1} \\ p_2 \equiv 3 \Mod 4\\ 
					2mp_1p_2 + 1 \equiv 0 \Mod {d_1} \\ 
					2mp_1p_2 + 3 \equiv 0 \Mod {d_2}}}1,\label{inequality_T}
		\end{align}
		where we have used the fact that $d_1|2mp_1p_2+1$ and $d_2|2mp_1p_2+3$, so we have $(d_1, 2)=(d_2, 2)=1$ and $(d_1, d_2)=1$ or $2$. Since both $d_1$ and $d_2$ are odd, we can conclude that $(d_1, d_2)=1$. Moreover, $(d_2, 3)=1$ since $d_2|2mp_1p_2+3$. 
		
        We now want to estimate the inner sum over $p_2$ in the above expression. To do that, for any $y\geq 2$, we define 
		\[
		E(y; m, q, a) := \pi(y; m, q, a) - \frac{\pi(y; m, 1, 1)}{\varphi(q)},
		\]	
		where $\pi(y; m, q, a)$ is given by \eqref{Def: generalized prime counting function}. Then, we rewrite \eqref{inequality_T} as
		\begin{align}
			\notag 	T \leq &\:  (1+o(1)) \sum_{m \leq \frac{x}{2z_1^2}} b^*(m) 
			\sum_{\substack{d_1 \leq D_1\\ d_2 \leq D_2 \\ (d_1, 2m) = (d_2, 6m) =1\\ (d_1, d_2) = 1}} 
			\lambda_1^+(d_1) \lambda_2^+(d_2) 
			\sum_{\substack{z_1 < p_1 \leq \sqrt{\frac{x}{2m}} \\ p_1 \equiv 3 \Mod 4}}
			\pi\left(x; 2mp_1, 4d_1d_2, r\right) \\
			\label{Eq: Sum T} 	=&\:  (1+o(1)) \sum_{m \leq \frac{x}{2z_1^2}} b^*(m)  
			\sum_{\substack{d_1\leq D_1\\ d_2 \leq D_2\\ (d_1, 2m) = (d_2, 6m) =1\\ (d_1, d_2) = 1}} 
			\lambda_1^+(d_1) \lambda_2^+(d_2) 
			\sum_{\substack{z_1 < p_1 \leq \sqrt{\frac{x}{2m}} \\ p_1 \equiv 3 \Mod 4}}
			\frac{\pi\left(x; 2mp_1, 1, 1\right)}{\varphi(4d_1d_2)} + O(E(x)),
		\end{align}
		for some reduced residue class \( r \) modulo \( 4d_1d_2 \), where 
		\[
		E(x) \ll \bigg| \sum_{m \leq \frac{x}{2z_1^2}} b^*(m) 
		\sum_{\substack{d_1 \leq D_1\\ d_2\leq D_2 \\ (d_1, 2m) = (d_2, 6m) =1\\ (d_1, d_2) = 1}} 
		\lambda_1^+(d_1) \lambda_2^+(d_2) 
		\sum_{\substack{z_1 < p_1 \leq \sqrt{\frac{x}{2m}} \\ p_1 \equiv 3 \Mod 4}}
		E\left(x; 2mp_1, 4d_1d_2, r\right)\bigg|.
		\]
		First, we show that the contributions from $E(x)$ is negligible. By the Cauchy-Schwarz inequality, we have
		\begin{align}
			E (x)\ll & \sum_{\substack{d_1 \leq D_1\\ d_2 \leq D_2 \\ (d_1, d_2) = 1}} 
			\Bigg| \sum_{\substack{m \leq \frac{x}{2z_1^2}\\ (d_1, 2m) = (d_2, 6m) = 1}} b^*(m) 
			\sum_{\substack{z_1 < p_1 \leq \sqrt{\frac{x}{2m}} \\ p_1 \equiv 3 \Mod 4}}
			E\left(x; 2mp_1, 4d_1d_2, r\right) \Bigg| \nonumber \\
			\ll & \sum_{q \leq 4D_1D_2} \tau(q) 
			\Bigg| \sum_{\substack{m \leq \frac{x}{2z_1^2}\\ (q, 6m)=1}} b^*(m) 
			\sum_{\substack{z_1 < p_1 \leq \sqrt{\frac{x}{2m}} \\ p_1 \equiv 3 \Mod 4}}
			E\left(x; 2mp_1, q, r\right) \Bigg|\nonumber \\
			\ll & \left( \sum_{q \leq 4D_1D_2} \tau(q)^2 
			|E(x,q)| \right)^{1/2} \left( \sum_{q \leq 4D_1D_2} 
			|E(x,q)| \right)^{1/2},\label{Cauchy-Schwarz}
		\end{align}
		where \[
		E(x,q):= \sum_{\substack{m \leq \frac{x}{2z_1^2}\\ (6m, q)=1}} b^*(m) 
		\sum_{\substack{z_1 < p_1 \leq \sqrt{\frac{x}{2m}} \\ p_1 \equiv 3 \Mod 4}}
		E\left(x; 2mp_1, q, r\right).
		\]
		First, we use the trivial bound \( E(x; n, q, r) \ll \frac{x}{n \varphi(q)} \) and the fact that $\varphi(n)\gg n/\log \log n$ together with $\sum_{q\leq x}\tau(q)^2/q\ll (\log x)^4$ to note that
		\begin{align}
			\notag 	\left( \sum_{q \leq 4D_1D_2} \tau(q)^2 |E(x, q)| \right)^{\frac{1}{2}} 
			&\ll \left( \sum_{n \leq x} \sum_{q \leq x} \tau(q)^2 |E(x; n, q, a)| \right)^{\frac{1}{2}} \\
			\notag 	&\ll \left( \sum_{n \leq x} \sum_{q \leq x} \tau(q)^2 \frac{x \log \log q}{n q} \right)^{\frac{1}{2}}\\
			\label{Cauchy-Schwarz i}	&\ll \left( x (\log x)^6 \right)^{1/2}.
		\end{align}
		Next, for the second sum in \eqref{Cauchy-Schwarz}, for any \( A > 0 \), suppose
		\begin{equation}\label{condition_D_L}
			4D_1D_2 \leq \frac{(x/2)^{1/2}}{(\log(x/2))^B}, \quad B= \frac{3(2A+6)}{2} + 17.
		\end{equation}
        Now, we write
        \begin{align*}
            \mathcal{L}=\{\ell\colon \ell=2mp_1, b^*(m)=1, m\leq x/z_1^2, (6m, q)=1, z_1<p_1\leq (x/2m)^{1/2}, p_1\equiv 3\Mod 4\}.
        \end{align*}
		Then by noting that 
		\[
		2mp_1 \leq x^{1-\theta_1}
		\]
        so that we have $\#\mathcal{L}\ll x^{1-\theta_1}$. Therefore, we apply Theorem \ref{theorem_pan} with $f=1_{\mathcal{L}}$ to obtain
		\begin{align}\label{Cauchy-Schwarz ii}
			\sum_{q \leq 4D_1D_2} |E(x, q)| 
			\ll  \frac{x}{(\log x)^{2A+6}}.
		\end{align}
		Therefore, from \eqref{Cauchy-Schwarz}, \eqref{Cauchy-Schwarz i}, and \eqref{Cauchy-Schwarz ii}, we have \[
		E(x)\ll \frac{x}{(\log x)^A},
		\]
		which is an acceptable error term. Hence, from \eqref{Eq: Sum T}, we have
		\begin{align}
			T  \leq & \: (1+o(1))	\sum_{m \leq \frac{x}{2z_1^2}} b^*(m)  \sum_{\substack{d_1 \leq D_1\\ d_2 \leq D_2 \\ (d_1, 2m) = (d_2, 6m) =1\\ (d_1, d_2) = 1}} 
			\lambda_1^+(d_1) \lambda_2^+(d_2) 
			\sum_{\substack{z_1< p_1 \leq \sqrt{\frac{x}{2m}} \\ p_1 \equiv 3 \Mod 4}}
			\frac{\pi\left(x;2 mp_1, 1, 1\right)}{\varphi(4d_1d_2)} \nonumber\\
			= &\: (1+o(1)) \sum_{m \leq \frac{x}{2z_1^2}} b^*(m) \sum_{\substack{d_1 \leq D_1\\ d_2\leq D_2 \\ (d_1, 2m) = (d_2, 6m) =1\\ (d_1, d_2) = 1}} 
			\frac{\lambda_1^+(d_1) \lambda_2^+(d_2)}{2\varphi(d_1) \varphi(d_2)} 
			\sum_{\substack{z_1< p_1 \leq \sqrt{\frac{x}{2m}} \\ p_1 \equiv 3 \Mod 4}}
			\pi\left(\frac{x}{2mp_1}\right), \label{upper_bound_T}
		\end{align}
		where $\pi(t)$ counts the number of primes not exceeding $t$ for any $t\geq 2$.
		
		Now, we wish to estimate the sum over the sieve weights. Since \( \lambda_1^+ \) and \( \lambda_2^+ \) are the upper bound linear sieve weights, we may apply \cite[Theorem 5.9]{FI2010} to obtain 
		\[
		\sum_{\substack{d_1 \leq D_1\\ d_2 \leq D_2 \\ (d_1, 2m) = (d_2, 6m) =1\\ (d_1, d_2) = 1}} 
		\frac{\lambda_1^+(d_1) \lambda_2^+(d_2)}{\varphi(d_1) \varphi(d_2)} 
		\leq  \frac{\big(4 + o(1)\big) C_1 H_1(m) H_2(m)}{(\log D_1)( \log D_2)},
		\]
		where 
		\[
		C_1 = \prod_{(p, 6m) = 1} \left(1 + \frac{1}{(p-2)^2}\right) \leq \prod_{p > 3} \left(1 + \frac{1}{(p-2)^2}\right),
		\]
		\[
		H_1(m) = \frac{2m}{\varphi(2m)}\prod_{(p, 2m) = 1} \left(1 - \frac{1}{(p-1)^2}\right)  \leq \frac{2m}{\varphi(2m)} \prod_{p > 2} \left(1 - \frac{1}{(p-1)^2}\right),
		\]
		\[
		H_2(m) = \frac{6m}{\varphi(6m)}\prod_{(p, 6m) = 1} \left(1 - \frac{1}{(p-1)^2}\right)  \leq \frac{6m}{\varphi(6m)} \prod_{p > 3} \left(1 - \frac{1}{(p-1)^2}\right).
		\]
		For brevity, we denote
		\[
		c_1:= \prod_{p > 3} \left(1 + \frac{1}{(p-2)^2}\right) \quad \text{and} \quad 
		c_2 := \prod_{p > 2} \left(1 - \frac{1}{(p-1)^2}\right).
		\]
		Then, if $(m, 6)=1$, we have
		\[\sum_{\substack{d_1 \leq D_1\\ d_2 \leq D_2 \\ (d_1, 2m) = (d_2, 6m) =1\\ (d_1, d_2) = 1}} 
		\frac{\lambda_1^+(d_1) \lambda_2^+(d_2)}{\varphi(d_1) \varphi(d_2)} \leq \frac{(32 + o(1)) c_1 c_2^2 m^2}{\varphi^2(m)(\log D_1) (\log D_2)}.
		\]
		Combining the above estimate together with \eqref{upper_bound_T}, we have
		\begin{align}
			\notag 	T \leq & \; (16 + o(1)) \frac{c_1 c_2^2}{(\log D_1) (\log D_2)}\sum_{m \leq \frac{x}{2z_1^2}} \frac{b^*(m)m^2}{\varphi^2(m)} 	\pi\left(\frac{x}{2mp_1}\right) \\
			\label{Eq: T i}		= & \;
			(8 + o(1)) \frac{c_1 c_2^2 x}{(\log D_1) (\log D_2)} 	
			\sum_{m \leq \frac{x}{2z_1^2}} \frac{b^*(m)m}{\varphi^2(m)} 
			\sum_{\substack{z_1 < p_1 \leq \sqrt{\frac{x}{2m}} \\ p_1 \equiv 3 \Mod 4}}
			\frac{1}{p_1 \log \big(\frac{x}{2mp_1}\big)} ,		
		\end{align}
		where we have used the Prime Number Theorem in the last line.

		To estimate the inner sum, we define the functions: for any $t, u\geq 2$,
		\[
		h(t) := \frac{1}{\log \left(\frac{x}{2mt}\right)}, \quad H(u) := \int_{z_1}^{\sqrt{\frac{x}{2m}}} \frac{1}{\log \left(\frac{x}{2ut}\right)} d(\log \log t).
		\]
		Recall that by Mertens' Theorem, for any $t\geq 4$, there exists a constant $C_0>0$ such that
		\[
		S_1(t) := \sum_{\substack{p < t \\ p \equiv 3 \Mod 4}} \dfrac{1}{p} = \frac{1}{2} \log \log t + C_0 + O\left(\frac{1}{\log t}\right).
		\]
		Applying integration by parts to the inner sum, we obtain
		\begin{align*}
			\sum_{\substack{z_1 < p_1 \leq \sqrt{\frac{x}{2m}} \\ p_1 \equiv 3 \pmod{4}}} 
			\frac{1}{p_1 \log \big(\frac{x}{2mp_1}\big)} &= \int_{z_1}^{\sqrt{\frac{x}{2m}}} h(t) \, d\left(\frac{1}{2} \log \log t\right) + O\left(\frac{h\left(\sqrt{\frac{x}{2m}}\right)}{\log z_1}\right) \\
			&= \frac{1}{2} H(m) + O\left(\frac{1}{(\log x)^2}\right).
		\end{align*}
		Therefore, we have
		\begin{equation}\label{outer_sum}
			\sum_{m \leq \frac{x}{2z_1^2}} \frac{b^*(m)m}{\varphi^2(m)} 
			\sum_{\substack{z_1 < p_1 \leq \sqrt{\frac{x}{2m}} \\ p_1 \equiv 3 \pmod{4}}}
			\frac{1}{p_1 \log \big(\frac{x}{2mp_1}\big)}=	\frac{1}{2} 	\sum_{m \leq \frac{x}{2z_1^2}} \frac{b^*(m) m}{\varphi(m)^2} H(m) + O\bigg(\frac{1}{(\log x)^2}\sum_{m \leq \frac{x}{2z_1^2}} \frac{b^*(m) m}{\varphi(m)^2} \bigg).
		\end{equation}
		For $t\geq 2$, we let
		\begin{align*}
			S_2(t) := \sum_{m \leq t} \frac{b^*(m) m}{\varphi(m)^2} \intertext{and for any integer $n\geq 1$, we let}
			g(n): = \frac{b^*(n)n^2}{\varphi(n)^2}.
		\end{align*}
		First, we note that since $\varphi(n)\gg n/(\log \log n)$, we have 
		\[
		g(n) \ll (\log \log n)^2.
		\]
		Next, by Mertens' theorem, for any $w\geq 2$, we have
		\begin{align*}
			\sum_{p \leq w} \frac{g(p) \log p}{p} 
			&= \sum_{\substack{p \leq w \\ p \equiv 1 \Mod 4}} \frac{p \log p}{(p-1)^2} \\
			&= \sum_{\substack{p \leq w \\ p \equiv 1 \Mod 4}} \frac{\log p}{p} 
			+ \sum_{\substack{p \leq w \\ p \equiv 1 \Mod 4}} \frac{(2p-1) \log p}{p(p-1)^2}\\
			&=\frac{1}{2} \log w + O(1).
		\end{align*}
		Therefore, the conditions in \cite[Proposition A.3.]{Mar2002} are satisfied, and so, we have\footnote{We could have applied a variant of Wirsing's theorem \cite{Wir1961}. However, for convenience, we have chosen to use \cite[Proposition A.3.]{Mar2002}}
		\begin{align}\label{Eq: sum S2}
			S_2(t) = \sum_{n \leq t} \frac{g(n)}{n} = c_3 (\log t)^{1/2} + O\left(\frac{1}{(\log t)^{1/2}}\right),
		\end{align}
		where
		\[
		c_3 = \frac{2}{\sqrt{\pi}} 
		\prod_{p \equiv 1 \Mod 4} \left(1 - \frac{1}{p}\right)^{1/2} 
		\left(1 + \frac{p^2}{(p-1)^3}\right) 
		\prod_{p \equiv 3 \Mod 4} \left(1 - \frac{1}{p}\right)^{1/2}.
		\]
		Therefore, using \eqref{Eq: sum S2} in \eqref{outer_sum}, we have
		\begin{align}\label{outer_sum ii}
			\sum_{m \leq \frac{x}{2z_1^2}} \frac{b^*(m)m}{\varphi^2(m)} 
			\sum_{\substack{z_1 < p_1 \leq \sqrt{\frac{x}{2m}} \\ p_1 \equiv 3 \pmod{4}}}
			\frac{1}{p_1 \log \big(\frac{x}{2mp_1}\big)}=	\frac{1}{2}\sum_{m \leq \frac{x}{2z_1^2}} \frac{b^*(m) m}{\varphi(m)^2} H(m) + O\left(\frac{1}{(\log x)^{3/2}}\right).
		\end{align}
		Next, we evaluate the main term in the above expression. Indeed, using \eqref{Eq: sum S2}, we have
		\begin{align}
			\notag 	\frac{1}{2}\sum_{m \leq \frac{x}{2z_1^2}} \frac{b^*(m) m}{\varphi(m)^2} H(m)
			&\leq \frac{1}{2} \int_1^{\frac{x}{2z_1^2}} H(u) \, dS_2(u) \\
			\notag 	&= \frac{c_3 }{2} \int_1^{\frac{x}{2z_1^2}} H(u) \, d(\log u)^{1/2} 
			+ O\left(\frac{\max \left(H(1), H\left(\frac{x}{2z_1^2}\right)\right)}{(\log \frac{x}{2z_1^2})^{1/2}}\right) \\
			\label{Eq: Main i}	&= \frac{c_3 }{2} \int_1^{\frac{x}{2z_1^2}} H(u) \, d(\log u)^{1/2} 
			+ O\left(\frac{\log \log x}{(\log x)^{3/2}}\right).
		\end{align}

		Let \[\varepsilon_1 = \frac{\log z_1}{\log \frac{x}{2}}.  \]
		Note that $\varepsilon_1\sim \theta_1$ as $z_1=x^{\theta_1}$ by our assumption. To evaluate the integral, we make the change of variables \(t = (x/2)^{\alpha}\), \(u = (x/2)^{\beta}\) to obtain
		\begin{align}
			\notag 	\int_1^{\frac{x}{2z_1^2}} H(u) \, d(\log u)^{1/2} 
			&= \int_1^{\frac{x}{2z_1^2}} \int_{z_1}^{\sqrt{\frac{x}{2u}}} \frac{1}{\log \left(\frac{x}{2ut}\right)} \, d(\log \log t) \, d(\log u)^{1/2} \\	
			\notag 	&= \frac{1}{2\left(\log \frac{x}{2}\right)^{1/2}} \int_0^{1-2\varepsilon_1} \frac{1}{\beta^{1/2}} 
			\int_{\varepsilon_1}^{\frac{1-\beta}{2}} \frac{1}{\alpha(1-\alpha-\beta)} \, d\alpha \, d\beta \\	
			\label{Eq: Main ii}	&= \frac{1}{2\left(\log \frac{x}{2}\right)^{1/2}} \int_0^{1-2\varepsilon_1} \frac{1}{\beta^{1/2}(1-\beta)} 
			\log\left(\frac{1-\beta-\varepsilon_1}{\varepsilon_1}\right) \, d\beta.
		\end{align}
		We write 
		\[
		C(\varepsilon_1) := \int_0^{1-2\varepsilon_1} \frac{1}{\beta^{1/2}(1-\beta)} \log\left(\frac{1-\beta-\varepsilon_1}{\varepsilon_1}\right) d\beta.
		\]
		Therefore, we can infer from \eqref{Eq: T i}, \eqref{outer_sum ii}, \eqref{Eq: Main i}, \eqref{Eq: Main ii} that		\[
		T \leq \left(2 c_1 c_2^2 c_3 C(\varepsilon_1) + o(1)\right) \frac{x}{(\log D_1) (\log D_2)\left(\log \frac{x}{2}\right)^{1/2}}.
		\]
		Finally, to minimize \( (\log D_1\log D_2)^{-1} \) under the condition \eqref{condition_D_L}, we choose  
		\[
		D_1 = \frac{(x/2)^{\min(\theta_1, 1/2-\theta_2)}}{4(\log (x/2))^{B_2}}, \quad D_2 = \left(\frac{x}{2}\right)^{\theta_2},
		\]
		which gives  
		\[
		\frac{1}{(\log D_1)( \log D_2)} \sim \frac{1}{\theta_2\min(\theta_1, 1/2-\theta_2)(\log x)^2}.
		\]
		Since \( C(\varepsilon_1) \sim C(\theta_1) \), it follows that  
		\begin{equation}\label{term_2}
			T \leq \left(2 c_1 c_2^2 c_3 C(\theta_1) + o(1)\right) \frac{x}{\theta_2\min(\theta_1, 1/2-\theta_2)(\log x)^{5/2}},
		\end{equation}
        as desired.
	\end{proof}

	For the weighted terms in \eqref{Eq: Main estimate}, we will apply Proposition \ref{vector_sieve} to obtain the following upper bound.

	\begin{prop}\label{Prop: weighted}
		Let \( z_2 = x^{\theta_2} \), \( y = x^\theta \), and \( 0 < \theta_2 < \theta < 1 \). Then, for sufficiently large \( x \), we have
		\begin{align*}
			\sum_{z_2 \leq p \leq y} w_p \#\mathcal{B}_p\leq \left(\frac{Ce^{-3\gamma/2}}{4 \theta_1^{1/2} \theta_2} + o(1)\right) 
			I(\theta, \theta_1, \theta_2) \frac{x}{(\log x)^{5/2}},
		\end{align*}
		where $\gamma$ is the Euler-Mascheroni constant,
		\[
		C = \frac{9}{4} \prod_{\substack{p > 3 \\ p \equiv 3\Mod 4}} \left(1 - \frac{3p - 1}{(p-1)^3}\right) 
		\prod_{p \equiv 1 \Mod 4} \left(1 - \frac{1}{(p-2)^2}\right),
		\]
		and 
		\begin{align*}
			I(\theta, \theta_1, \theta_2):=\int_{\theta_2}^\theta \left(1 - \frac{\alpha}{\theta}\right) F\left(\frac{1 - 2\alpha}{2\theta_1}, \frac{1 - 2\alpha}{2\theta_2}\right) \frac{ d\alpha}{\alpha},
		\end{align*}
		where the function \( F \) is given by \eqref{Eq: Function F sigma1 sigma2}.
	\end{prop}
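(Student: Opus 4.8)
The plan is to mimic the proof of Proposition \ref{Prop: lower bound}, but to apply the \emph{upper} bound \eqref{upperbound} of the vector sieve one prime factor $q$ of $p+2$ at a time. Since $w_q = 1 - \tfrac{\log q}{\theta\log x} \ge 0$ for $z_2 \le q \le y$ and every element of $\mathcal{B}$ is coprime to $P(x^{\theta_2})$, each prime $q$ in that range satisfies $q \nmid P(x^{\theta_2})$; writing $p+2 = qm$, the condition $(p+2, P(x^{\theta_2})) = 1$ becomes $(m, P(x^{\theta_2})) = 1$. Relaxing also ``$p-1$ has no prime factor $\equiv 3 \Mod 4$'' to $(p-1, Q(x^{\theta_1})) = 1$, I would set, for each such $q$,
\[
\mathcal{W}^{(q)} := \big\{\, (p-1,\ (p+2)/q)\colon 3 \le p \le x-2,\ p \text{ prime},\ p \equiv 3 \Mod 8,\ q \mid p+2 \,\big\},
\]
choose $\mathcal{P}_1 = \mathcal{Q}$, $\mathcal{P}_2 = \mathcal{P}$, $z_1 = x^{\theta_1}$, $z_2 = x^{\theta_2}$, and record that $\#\mathcal{B}_q \le S(\mathcal{W}^{(q)}; (z_1, z_2))$.

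Next I would verify axioms \ref{axiom 1}--\ref{axiom 3} for $\mathcal{W}^{(q)}$. For $d_1 \mid Q(x^{\theta_1})$ and $d_2 \mid P(x^{\theta_2})$, the conditions $d_1 \mid p-1$, $d_2 \mid (p+2)/q$ and $p \equiv 3 \Mod 8$ place $p$ in a single residue class modulo $8 d_1 q d_2$ (the terms with $q \mid d_1$ are empty since $q > 3$), so Lemma \ref{Lemma_HBL_2} with $r = 1$ gives $\#\mathcal{W}^{(q)}_{\mathbf{d}} = h(\mathbf{d}) X^{(q)} + r^{(q)}(\mathbf{d})$ with $X^{(q)} = \pi(x)/(4(q-1))$ and \emph{exactly the same} multiplicative function $h$ as in \eqref{Eq: lower def h} --- including the factor $2$ when $3 \mid (d_1,d_2)$, since then $3 \mid p-1$ and $3 \mid p+2$ are the same congruence. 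Consequently $h^*$, $V(z_0, h^*)$, $V_1(z_1, z_0)$, $V_2(z_2, z_0)$ are literally those of Proposition \ref{Prop: lower bound}, so \eqref{VV_1V_2_asymptotic}--\eqref{definition_V_2} yield $V(z_0, h^*) V_1(z_1, z_0) V_2(z_2, z_0) \sim C\, V_1(z_1) V_2(z_2) \sim \tfrac{C e^{-3\gamma/2}}{\theta_1^{1/2}\theta_2(\log x)^{3/2}}$. Applying \eqref{upperbound} to $\mathcal{W}^{(q)}$ with the $q$-dependent level $D = D^{(q)} := x^{1/2-\varepsilon}/q$ --- so that, writing $q = x^\alpha$, one has $\sigma_1 = \tfrac{\log D^{(q)}}{\theta_1\log x} \to \tfrac{1-2\alpha}{2\theta_1}$ and $\sigma_2 \to \tfrac{1-2\alpha}{2\theta_2}$ as $\varepsilon \to 0$ --- gives $\#\mathcal{B}_q \le (1+o(1)) X^{(q)} V(z_0, h^*) V_1(z_1, z_0) V_2(z_2, z_0)\, F(\sigma_1, \sigma_2) + R^{(q),+}$. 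Multiplying by $w_q = 1 - \alpha/\theta$, summing over $q$, and using $\pi(x) \sim x/\log x$, the main term becomes $\big(\tfrac{C e^{-3\gamma/2}}{4\theta_1^{1/2}\theta_2} + o(1)\big)\tfrac{x}{(\log x)^{5/2}}\sum_{z_2 \le q \le y}\tfrac{w_q}{q-1} F(\sigma_1,\sigma_2)$, and by partial summation via Mertens' theorem (under $t = x^\alpha$ one has $d\big(\sum_{q\le t}\tfrac1q\big) \sim \tfrac{d\alpha}{\alpha}$) this last sum converges to $I(\theta,\theta_1,\theta_2)$. It is here that the constraint $\sigma_2 \ge \beta_2 - 1 = 1$, i.e. $D^{(q)} \ge x^{\theta_2}$, i.e. $\alpha \le \tfrac12 - \theta_2$, enters: this is precisely the range of $q$ on which $F\big(\tfrac{1-2\alpha}{2\theta_1}, \tfrac{1-2\alpha}{2\theta_2}\big)$ is finite (and on which the integral defining $I$ converges), so the estimate is uniform over the relevant $q$.

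For the error, $\sum_{z_2 \le q \le y} w_q R^{(q),+} \ll_\varepsilon \sum_q \sum_{d_1 d_2 \le (D^{(q)})^{1+\varepsilon}} \tau(d_1 d_2)^4 |r^{(q)}(d_1,d_2)|$; since $r^{(q)}(d_1,d_2)$ equals, up to negligible corrections, $\pi(x; 8d_1 q d_2, a) - \pi(x)/\varphi(8 d_1 q d_2)$, grouping the triples $(d_1, q, d_2)$ by the modulus $\ell = 8 d_1 q d_2$ --- each $\ell$ arising from $\ll \tau(\ell)^2$ triples and satisfying $\tau(d_1 d_2) \le \tau(\ell)$ --- reduces everything to $\sum_{\ell < x^{1/2}(\log x)^{-B}} \tau(\ell)^6 \max_{(a,\ell)=1}|\pi(x;\ell,a) - \pi(x)/\varphi(\ell)| \ll x/(\log x)^A$ by Lemma \ref{Lemma_HBL_2}. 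The main obstacle is precisely this last point: the level $D^{(q)} = x^{1/2-\varepsilon}/q$ must be chosen so that, \emph{after} summing over $q \in [z_2, y]$, the accumulated moduli $8 q (D^{(q)})^{1+\varepsilon} = 8 q^{-\varepsilon} x^{1/2 - \varepsilon/2 - \varepsilon^2} \le 8 x^{1/2 - \varepsilon/2 - \varepsilon^2 - \varepsilon\theta_2}$ still stay below the Bombieri--Vinogradov threshold $x^{1/2}(\log x)^{-B}$ for large $x$; it is the extra saving $q^{-\varepsilon}$ that makes the $q$-summation legitimate. Letting $\varepsilon \to 0$ and invoking the continuity of $F$ then gives the claimed inequality.
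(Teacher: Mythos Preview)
Your proposal is correct and follows essentially the same argument as the paper. The only difference is cosmetic: you introduce, for each prime $q\in[z_2,y]$, a separate set $\mathcal{W}^{(q)}$ whose second coordinate is $(p+2)/q$, whereas the paper works throughout with the single set $\mathcal{W}$ of \eqref{Def: W} and simply observes that for $q\ge z_2$ with $(q,d_1)=1$ one has $h(d_1,qd_2)=h(d_1,d_2)/\varphi(q)$, so that $\#\mathcal{W}_{(d_1,qd_2)}$ plays the role of your $\#\mathcal{W}^{(q)}_{(d_1,d_2)}$; the two formulations are interchangeable. Your treatment of the accumulated remainder after summing over $q$ (grouping moduli as $\ell=8d_1qd_2$ and applying Lemma~\ref{Lemma_HBL_2} with a higher divisor power) is spelled out more carefully than in the paper, which just records an individual error $O\big((x/p)(\log x)^{-10}\big)$ and sums trivially.
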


	\begin{proof}
		We now apply the upper bound vector sieve to the set \(\mathcal{W}\) as defined in \eqref{Def: W} by choosing $\mathcal{P}_1=\mathcal{Q}$, $\mathcal{P}_2=\mathcal{P}$ in Proposition \ref{vector_sieve}.
		Let \( d_1 \mid P_1(z_1) = Q(z_1) \), \( d_2 \mid P_2(z_2) = P(z_2) \), and \( z_2 \leq p \leq y \), where \( z_1 = x^{\theta_1} \) and \( z_2 = x^{\theta_2} \). If \( (p, d_1) = 1 \), then 
        \[
              h(d_1, p d_2) = \frac{1}{\varphi(p)} h(d_1, d_2).
        \]
		Additionally, note that if \( p \mid d_1 \), then \( \#\mathcal{W}_{(d_1, p d_2)} = 0 \).
		Also, recall that in \eqref{VV_1V_2_asymptotic}, we have calculated the asymptotic
		\[
		V(z_0, h^*) V_1(z_1, z_0) V_2(z_2, z_0) \sim C V_1(z_1) V_2(z_2),
		\]
		where $C$ is given by \eqref{Def: C}, and \( V_1(z_1) \) and \( V_2(z_2) \) are given by \eqref{definition_V_1} and \eqref{definition_V_2}, respectively.
		
		We choose $D=x^{1/2-2\varepsilon}/p$ in Proposition \ref{vector_sieve}. Then, we apply the upper bound vector sieve \eqref{upperbound} and Lemma \ref{Lemma_HBL_2} to obtain 
		\[
		\#\mathcal{B}_p \leq (C + o(1)) \frac{\pi(x)}{4(p-1)} V_1(z_1) V_2(z_2) F(s_1(p), s_2(p)) 
		+ O\bigg(\dfrac{x/p}{(\log x)^{10}}\bigg),
		\]
		where 
		\[
		s_i(p) = \frac{\log(x^{1/2-2\varepsilon}/p)}{\log z_i} \quad \text{for $i\in \{1, 2\}$}.
		\]
		Therefore, by Mertens' estimate,
		\begin{align}\label{Eq: weight i}
			\sum_{z_2 \leq p \leq y} w_p \#\mathcal{B}_p \leq \left(\frac{C}{4} + o(1)\right) \pi(x) V_1(z_1) V_2(z_2) 
			\sum_{z_2 \leq p \leq y} \frac{w_p F(s_1(p), s_2(p))}{p-1} + O\bigg(\dfrac{x}{(\log x)^{10}}\bigg).
		\end{align}
		Next, we write
		\[
		\sigma_i(t) = \frac{\log(\sqrt{x}/t)}{\log z_i} \quad \text{for $i\in \{1, 2\}$}.
		\]
		Then, by the Prime Number Theorem and by a change of variable $t=x^\alpha$, we have
		\begin{align*}
			\sum_{z_2 \leq p \leq y} \frac{w_p F(s_1(p), s_2(p))}{p-1} 
			&\leq \int_{z_2}^y \left(1 - \frac{\log t}{\log y}\right) F(\sigma_1(t), \sigma_2(t)) d(\log \log t) \\
			&= \int_{\theta_2}^\theta \left(1 - \frac{\alpha}{\theta}\right) F\left(\frac{1 - 2\alpha}{2\theta_1}, \frac{1 - 2\alpha}{2\theta_2}\right) \frac{d\alpha}{\alpha}\\
			& := I(\theta, \theta_1, \theta_2),
		\end{align*}
		say. Hence, by \eqref{Eq: weight i} together with \eqref{definition_V_1} and \eqref{definition_V_2}, we conclude that
		\begin{equation}\label{term_3}
			\sum_{z_2 \leq p \leq y} w_p \#\mathcal{B}_p \leq \left(\frac{Ce^{-3\gamma/2}}{4\theta_1^{1/2 }\theta_2} + o(1)\right) I(\theta, \theta_1, \theta_2) \frac{x}{(\log x)^{5/2}}.
		\end{equation}
	\end{proof}

	\section{Proof of Theorem \ref{theorem1.1}}\label{sec: proof}
	First, we will apply Lemma \ref{Lem: Beta sieve} with $\kappa_1=1/2$ and $\kappa_2=1$. Indeed, by \cite[p. 225]{FI2010}, we have  $\beta_1(1/2)=1$ and $\beta_2(1)=2$. So, we can explicitly compute the functions $F_i, f_i$ in Lemma \ref{Lem: Beta sieve} and Proposition \ref{vector_sieve}. 
	
	Indeed, if $\beta_1=1$ in Proposition \ref{vector_sieve}, by Lemma \ref{Lem: Beta sieve} and \cite[(14.2)--(14.3)]{FI2010}, we have
	\[
	\begin{aligned}
		F_1(s) &= 2 \left(\frac{e^\gamma}{\pi s}\right)^{1/2}, \quad &0 < s \leq 2, \\
		f_1(s) &= \left(\frac{e^\gamma}{\pi s}\right)^{1/2} \log\left(1 + 2(s - 1) + 2\sqrt{s(s - 1)}\right), \quad &1 \leq s \leq 3,
	\end{aligned}
	\]
	and continues with  the differential equations
	\begin{align*}		
		&\dfrac{d}{ds}(F_1(s)) = \frac{f_1(s - 1) - F_1(s)}{2s}, &  s > 2, \\[6pt]
		&\dfrac{d}{ds}(f_1(s)) = \frac{F_1(s - 1) - f_1(s)}{2s}, &  s > 1.
	\end{align*}

	Next, if $\beta_2=2$ in Proposition \ref{vector_sieve}, by Lemma \ref{Lem: Beta sieve} and \cite[(12.1)--(12.2)]{FI2010}, we have
	\begin{align*}
		F_2(s) &= \frac{2e^\gamma}{s}, \quad & 1 \leq s \leq 3, \\
		f_2(s) &= \frac{2e^\gamma \log(s - 1)}{s}, \quad &2 \leq s \leq 4,
	\end{align*}
	and continues with the differential equations 
	\begin{align*}		
		&\dfrac{d}{ds}(F_2(s)) = {f_2(s - 1) - F_2(s)}, &  s > 3, \\[6pt]
		&\dfrac{d}{ds}(f_2(s)) = {F_2(s - 1) - f_2(s)}, &  s > 2.
	\end{align*}
Moreover, recall from \eqref{Eq: Function F sigma1 sigma2} and \eqref{Eq: Function f sigma1 sigma2}, we can express the functions \( F \) and \( f \) as
	\[
	F(\sigma_1, \sigma_2) := \inf \bigg\{ F_1(s_1) F_2(s_2) : \frac{s_1}{\sigma_1} + \frac{s_2}{\sigma_2} = 1, \, s_1 >0, \,  s_2\geq 1 \bigg\}.
	\]
	and 
	\begin{equation}
	\begin{aligned}\label{Eq: f proof}
		f(\sigma_1, \sigma_2) &:= \sup \bigg\{ f_1(s_1)F_2(s_2) + f_2(s_2)F_1(s_1) - F_1(s_1)F_2(s_2) :  \frac{s_1}{\sigma_1} + \frac{s_2}{\sigma_2} = 1, \, s_1 \geq 1, \,  s_2\geq 2 \bigg\}.
	\end{aligned}
	\end{equation}
	
	After recollecting the above information, we are now ready to complete the proof of Theorem \ref{theorem1.1}. To avoid confusion, we mention that we will use notations from Section \ref{sec: outline} without further comment.

	\begin{proof}[Proof of Theorem \ref{theorem1.1}]
		Let
		\begin{align}\label{Eq: Conditions}
			0 < \theta_2 < \theta_1 < \frac{1}{2}, \quad \theta_1 > \frac{1}{4}, \quad \theta_2 < \theta < 1, \quad \lambda > 0.
		\end{align}
		We apply Propositions \ref{Prop: lower bound}, \ref{Prop: upper bound}, and \ref{Prop: weighted} to deduce that
		\[
		S(\mathcal{A}, \mathcal{Q}, x^{1/2}) - \lambda \sum_{x^{\theta_2} \leq p \leq x^{\theta}} w_p \#\mathcal{B}_p \geq H(\lambda, \theta, \theta_1, \theta_2) \frac{x}{(\log x)^{5/2}},
		\]
		where \( H(\lambda, \theta, \theta_1, \theta_2) \) is given by
		\[
		H(\lambda, \theta, \theta_1, \theta_2) := \frac{Ce^{{-3\gamma}/{2}}}{4} \cdot \frac{f\big((2\theta_1)^{-1}, (2\theta_2)^{-1}\big)}{\theta_1^{1/2} \theta_2} 
		- \frac{2 c_1 c_2^2 c_3 C(\theta_1)}{\theta_2\min(\theta_1, 1/2-\theta_2)} 
		- \lambda\frac{Ce^{-{3\gamma}/{2}}}{4 \theta_1^{1/2} \theta_2} I(\theta, \theta_1, \theta_2).
		\]
		Here, $f$ is given by \eqref{Eq: f proof}, $\gamma$ is the Euler-Mascheroni constant, the constant terms are given by
		\[
		C := \frac{9}{4} \prod_{\substack{p > 3 \\ p \equiv 3 \Mod 4}} \left(1 - \frac{3p - 1}{(p - 1)^3}\right) \prod_{p \equiv 1 \Mod 4} \left(1 - \frac{1}{(p - 2)^2}\right),
		\]
		\[
		c_1 := \prod_{p > 3} \left(1 + \frac{1}{(p - 2)^2}\right),
		\quad c_2 := \prod_{p > 2} \left(1 - \frac{1}{(p - 1)^2}\right),\]
		\[
		c_3 := \frac{2}{\sqrt{\pi}} 
		\prod_{p \equiv 1 \Mod 4} \left(1 - \frac{1}{p}\right)^{1/2} 
		\left(1 + \frac{p^2}{(p-1)^3}\right) 
		\prod_{p \equiv 3 \Mod 4} \left(1 - \frac{1}{p}\right)^{1/2},
		\]
        and the integral terms are given by
		\[
		C(\theta_1) := \int_0^{1 - 2\theta_1} \frac{1}{\beta^{1/2}(1 - \beta)} \log\left(\frac{1 - \beta - \theta_1}{\theta_1}\right) d\beta,
		\]
		\[
		I(\theta, \theta_1, \theta_2) := \int_{\theta_2}^\theta \left(1 - \frac{\alpha}{\theta}\right) F\left(\frac{1 - 2\alpha}{2\theta_1}, \frac{1 - 2\alpha}{2\theta_2}\right) \frac{d\alpha}{\alpha}.
		\]
		Recall from \eqref{number_of_divisors_p+2}, the prime divisors of \( p+2 \) satisfies
		\[\omega(p+2) \leq \dfrac{1}{\lambda} + \dfrac{1}{\theta}.
		\]  
		So, given \eqref{Eq: Conditions}, we aim to find the smallest possible value of \( 1/\lambda + 1/\theta \) such that \[ H(\lambda, \theta, \theta_1, \theta_2) > 0 .\]
		Additionally, to compare the improvement in the number of divisors of \( p+2 \) by including the weighted term, we define the function \( G(\theta_1, \theta_2) \) by considering only the first two terms in \( H(\lambda, \theta, \theta_1, \theta_2) \):
		\[
		G(\theta_1, \theta_2) := \frac{Ce^{-{3\gamma}/{2}}}{4} \cdot \frac{f\big((2\theta_1)^{-1}, (2\theta_2)^{-1}\big)}{\theta_1^{1/2} \theta_2} 
		- \frac{2 c_1 c_2^2 c_3 C(\theta_1)}{\theta_2\min(\theta_1, 1/2-\theta_2)}.
		\]

		First, we use Matlab\footnote{Matlab files with computation are included with this work on arxiv.org.} to perform a stepwise iterative search for the values \(\theta_1\) and \(\theta_2\) such that \(G(\theta_1, \theta_2) > 0\) and \(\theta_2\) is as large as possible, we find that the largest possible \(\theta_2\), with a step size of 0.0001, is \(\theta_2 =  0.0219\). Taking \(\theta_1 =  0.431\), we obtain
		\[
		G(\theta_1, \theta_2) \approx 0.0376	.
		\]
		This corresponds to the result that \(\omega(p+2) < 1/\theta_2 <45.7\) without the weighted term \(\lambda \sum_{x^{\theta_2} \leq p \leq y} w_p \#\mathcal{B}_p^{(0)}\).
		
		Next, including the weighted term \(\lambda \sum_{x^{\theta_2} \leq p \leq y} w_p \#\mathcal{B}_p\), we perform a rough search near the estimated values of the parameters \(\theta\), \(\lambda\), \(\theta_1\), and \(\theta_2\). We obtain
		\[
		H(0.14,0.23,0.449,0.011)
		=1.2471
		\quad 
		\text{and} 
		\quad 
		\omega(p+2) < \frac{1}{0.14} + \frac{1}{0.23}
		= 11.4907.
		\]
		Therefore, we conclude that 
		\[
		\#\left\{ 
		p : p \leq x, \, p = 1 + m^2 + n^2, \, m, n \in \mathbb{N}, \, \Omega(p+2) \leq 11
		\right\} \gg \frac{x}{(\log x)^{5/2}},
		\]
		as desired.
	\end{proof}

	\bibliographystyle{plain}

\begin{thebibliography}{99}
		
		\bibitem{Cai2017}
		Y. Cai, \emph{Almost prime triples and Chen's theorem.}
		Acta Arith. 179 (2017), no. 3, 233--250.
		
		\bibitem{Che1973}
		J. R. Chen, \emph{On the representation of a larger even integer as the sum of a prime and the product of at most two primes.}
		Sci. Sinica   16 (1973), 157--176.
		
		
		
		\bibitem{FI2009}
		J. B. Friedlander and H. Iwaniec, \emph{Hyperbolic prime number theorem.}
		Acta Math. 202 (2009), no. 1, 1--19.
		
		
		
		
		\bibitem{FI2010}
		J. Friedlander and H. Iwaniec, \emph{Opera de cribro.}
		Amer. Math. Soc. Colloq. Publ., 57 American Mathematical Society, Providence, RI, 2010. 



        \bibitem{GPY2009}
        D. A. Goldston, J. Pintz, and C. Y. Y{\i}ld{\i}r{\i}m, \emph{Primes in tuples I}, Ann. of Math. (2)
170 (2009), no. 2, 819--862.
		
		\bibitem{HR1974}
		H. Halberstam and H.-E. Richert, \emph{Sieve methods.} London Mathematical Society Monographs, No. 4. Academic Press [Harcourt Brace Jovanovich, Publishers], London-New York, 1974.


        \bibitem{Har2007}
        G. Harman, \emph{Prime-detecting sieves.}
London Math. Soc. Monogr. Ser., 33
Princeton University Press, Princeton, NJ, 2007.
		
		
		\bibitem{HBL2016}
		R. Heath-Brown and X. Li, \emph{Almost prime triples and Chen's theorem.}
		J. Number Theory 169 (2016), 265--294.
		
		
		\bibitem{HI1975}
		M. N. Huxley and H. Iwaniec, \emph{Bombieri's theorem in short intervals.}
		Mathematika 22 (1975), no. 2, 188--194.
		
		\bibitem{Iwa1972}
		H. Iwaniec, \emph{Primes of the type $\phi(x, y) + a$ where $\phi$ is a quadratic form.} Acta Arith. 21 (1972), 203--234.
		
		\bibitem{Iwa1976} H. Iwaniec, \emph{The half dimensional sieve.} Acta Arith. {29} (1976), 69--95.
		
		
		\bibitem{Lin1960} Ju. V. Linnik, \emph{An asymptotic formula in an additive problem of Hardy-Littlewood.} Izv. Akad. Nauk SSSR Ser. Mat. {24} (1960), 629--706.
		
		\bibitem{Mar2002}
		G. Martin, \emph{An asymptotic formula for the number of smooth values of a polynomial.} 
		J. Number Theory 93 (2002), no. 2, 108--182.
		
		\bibitem{Mat2007}
		K. Matom\"aki, \emph{Prime numbers of the form $p=m^2+n^2+1$ in short intervals.}
		Acta Arith. 128 (2007), no. 2, 193--200.
		
		
		\bibitem{MRT2019i}
		K. Matom\"aki, M. Radziwi\l{}\l{}, and T. Tao, \emph{Correlations of the von Mangoldt and higher divisor functions I. Long shift ranges.}
		Proc. Lond. Math. Soc. (3) 118 (2019), no. 2, 284--350.
		

		
		\bibitem{May2015}
		J. Maynard, \emph{Small gaps between primes}, Ann. of Math. (2) 181 (2015), no. 1, 383--413.
		
		
		\bibitem{Mot1970}
		Y. Motohashi, \emph{On the distribution of prime numbers which are of the form $x^2+y^2+1$.}
		Acta Arith. 16 (1969/70), 351--363.
		
		
		\bibitem{Mot1971} Y. Motohashi, \emph{On the distribution of prime numbers which are of the form ``$x^2+y^2+1$''. II.} Acta Math. Acad. Sci. Hungar {22} (1971/72), 207--210.
		
		\bibitem{Nat2024}
		K. Nath, \emph{Primes with a missing digit: distribution in arithmetic progressions and an application in sieve theory.}
		J. Lond. Math. Soc. (2) 109 (2024), no. 1, Paper No. e12837, 59 pp.
		
		
		\bibitem{Pan1981}
		C. D. Pan, \emph{A new mean value theorem and its applications.} Recent progress in analytic number theory, Vol. 1 (Durham, 1979), pp. 275--287.
		Academic Press, Inc. [Harcourt Brace Jovanovich, Publishers], London-New York, 1981.
		
		
		\bibitem{Poly} 
		D. H. J. Polymath, \emph{Variants of the Selberg sieve, and bounded intervals containing many primes.} Res. Math. Sci. 1 (2014), Art. 12, 83 pp.


        \bibitem{Wir1961}
        E. Wirsing, \emph{Das asymptotische Verhalten von Summen \"uber multiplikative Funktionen.} Math. Ann. 143 (1961), 75--102.
		
		
		\bibitem{Ter2018}
		J.  Ter\"av\"ainen, \emph{The Goldbach problem for primes that are sums of two squares plus one.}
		Mathematika 64 (2018), no. 1, 20--70.
		
		
		\bibitem{Wu1998}
		J. Wu, \emph{Primes of the form $p = 1 + m^2 + n^2
			$ in short intervals.} Proc. Amer. Math. Soc. 126 (1998), no. 1, 1–8.
		
		
		\bibitem{Zha2014}
		
		Y. Zhang, \emph{Bounded gaps between primes.}
		Ann. of Math. (2) 179 (2014), no. 3, 1121--1174.
		
		\bibitem{Zhu2024}
		L. Zhu, \emph{Almost prime triples and Chen’s theorem.} Int. J. Number Theory 21 (2025), no. 1, 133-151.
		
		
		
	\end{thebibliography}

	\end{document}